\newcommand\version{February 19, 2011}
\newtheorem{theorem}{Theorem}[section]
\newtheorem{proposition}[theorem]{Proposition}
\newtheorem{lemma}[theorem]{Lemma}
\newtheorem{corollary}[theorem]{Corollary}
\theoremstyle{definition}
\newtheorem{assumption}[theorem]{Assumption}
\theoremstyle{remark}
\numberwithin{equation}{section}
\newcommand{\C}{\mathbb{C}}
\renewcommand{\epsilon}{\varepsilon}
\newcommand{\R}{\mathbb{R}}
\newcommand{\Sph}{\mathbb{S}}
\DeclareMathOperator{\dist}{dist}
\DeclareMathOperator{\dom}{dom}
\DeclareMathOperator{\re}{Re}
\DeclareMathOperator{\sgn}{sgn}
\DeclareMathOperator{\tr}{Tr}
\title[HSM inequality --- \version]{Hardy-Sobolev-Maz'ya inequalities\\ for arbitrary domains}
\author{Rupert L. Frank}
\address{Rupert L. Frank, Department of Mathematics, Fine Hall,
  Princeton University, Princeton, NJ 08544, USA}
\email{rlfrank@math.princeton.edu}
\author{Michael Loss}
\address{Michael Loss, School of Mathematics, Georgia Institute of
  Technology, Atlanta, GA 30332-0160, USA}
\email{loss@math.gatech.edu}
\begin{document}

\begin{abstract}
We prove a Hardy-Sobolev-Maz'ya inequality for arbitrary domains $\Omega\subset\R^N$ with a constant depending only on the dimension $N\geq 3$. In particular, for convex domains this settles a conjecture by Filippas, Maz'ya and Tertikas. As an application we derive Hardy-Lieb-Thirring inequalities for eigenvalues of Schr\"odinger operators on domains.
\end{abstract}

\thanks{\copyright\, 2011 by the authors. This paper may be reproduced, in its entirety, for non-commercial purposes.}

\maketitle

\section{Introduction and main result}

\subsection{Hardy-Sobolev-Maz'ya inequalities}

Hardy inequalities and Sobolev inequalities bound the size of a function, measured by a (possibly weighted) $L^q$ norm, in terms of its smoothness, measured by an integral of its gradient. Maz'ya \cite{Ma} proved that for functions on the half-space $\R^N_+=\{x\in\R^N:\ x_N>0\}$, $N\geq 3$, which vanish on the boundary, the sharp version of the Hardy inequality can be combined with the Sobolev inequality into a single inequality, namely,
\begin{equation}
 \label{eq:hsmhp}
\int_{\R^N_+} \left(|\nabla u|^2 -\frac{|u|^2}{4x_N^2}\right)dx \geq \sigma_N \left( \int_{\R^N_+} |u|^\frac{2N}{N-2} \,dx \right)^{\frac{N-2}{N}}, \quad u\in C_0^\infty(\R^N_+)\,.
\end{equation}
This inequality, its generalizations to different powers of the gradient \cite{BaFiTe} and its optimal constants \cite{TeTi,BeFrLo} have attracted attention recently. Another series of papers investigates extensions of Hardy's inequality to convex domains and possible $L^2$-remainder terms \cite{BrMa,HoHoLa,FiMaTe2,AvWi}. In \cite{FiMaTe1,FiMaTe3} Filippas, Maz'ya and Tertikas found an extension of the Hardy-Sobolev-Maz'ya inequality \eqref{eq:hsmhp} to convex domains. They prove that for any convex, bounded domain $\Omega$ with $C^2$-boundary there is a constant $\sigma(\Omega)$ such that
\begin{equation}
 \label{eq:hsmconv}
\int_{\Omega} \left(|\nabla u|^2 -\frac{|u|^2}{4 \dist(x,\Omega^c)^2}\right)dx 
\geq \sigma(\Omega) \left( \int_{\Omega} |u|^\frac{2N}{N-2} \,dx \right)^{\frac{N-2}{N}} ,
\quad u\in C_0^\infty(\Omega) \,.
\end{equation}
Open problem 1 in \cite{FiMaTe1} asks whether the constant $\sigma(\Omega)$ can be chosen independently of $\Omega$. Our main result is an affirmative answer to this question.

In fact, we shall prove a more general inequality, valid for \emph{any} (not necessarily convex) domain $\Omega$. This extension is in the spirit of Davies' paper \cite{Da}, where non-negativity of the left side of \eqref{eq:hsmconv} was observed. In that paper Davies also introduced the weight function
$$
D_\Omega(x) := \left( N |\Sph^{N-1}|^{-1} \int_{\Sph^{N-1}} d_e(x)^{-2} \,de \right)^{-\frac12},
$$
where $d_e(x) := \inf\{ |t|:\ x+te \in\Omega^c \}$ for $e\in\Sph^{N-1}$, and proves that for any domain $\Omega\subsetneq\R^N$ one has
\begin{equation}
 \label{eq:hardy}
\int_\Omega |\nabla u|^2 \,dx \geq \frac14 \int_\Omega \frac{|u|^2}{D_\Omega^2} \,dx \,,
\qquad u\in C_0^\infty(\Omega) \,.
\end{equation}
The relation between \eqref{eq:hardy} and the left side of \eqref{eq:hsmconv} is that
\begin{equation}
 \label{eq:distconv}
D_\Omega(x)\leq \dist(x,\Omega^c) \quad\text{if $\Omega$ is convex.}
\end{equation}
This follows by some elementary geometric considerations.

Having introduced all the relevant notation, we are now ready to state our main result.

\begin{theorem}\label{main}
 Let $N\geq 3$. There is a constant $K_N>0$ such that for any domain $\Omega\subsetneq\R^N$ and any $u\in C_0^\infty(\Omega)$
\begin{equation}
 \label{eq:main}
\int_\Omega \left( |\nabla u|^2 - \frac{|u|^2}{4D_\Omega^2} \right) dx \geq K_N \left( \int_\Omega |u|^\frac{2N}{N-2} \,dx \right)^{\frac{N-2}{N}} \,.
\end{equation}
\end{theorem}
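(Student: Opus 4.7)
The plan is to reduce the $N$-dimensional inequality to one-dimensional inequalities on lines (\emph{fibers}) via averaging over directions, and then recombine via Minkowski's inequality. Using the pointwise identity $|\nabla u(x)|^2 = \frac{N}{|\Sph^{N-1}|}\int_{\Sph^{N-1}} |e\cdot\nabla u(x)|^2\, de$ together with the defining average of $D_\Omega(x)^{-2}$, the left-hand side of \eqref{eq:main} can be rewritten as
\[
\frac{N}{|\Sph^{N-1}|} \int_{\Sph^{N-1}} \int_{e^\perp} \int_{\omega_{y,e}} \left( |\partial_t u(y+te)|^2 - \frac{|u(y+te)|^2}{4\, d_{\omega_{y,e}}(t)^2} \right) dt\, dy\, de,
\]
where $\omega_{y,e}:=\{t\in\R : y+te\in\Omega\} \subset \R$ is open and $d_\omega(t):=\dist(t,\omega^c)$. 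In this form the inner integrand is pointwise nonnegative by the classical 1D Hardy inequality, and the task is to extract a Sobolev-type improvement along each fiber.

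The heart of the argument is a one-dimensional strengthened Hardy inequality on an arbitrary open $\omega\subsetneq\R$: for $v\in C_0^\infty(\omega)$,
\[
\int_\omega \left( |v'|^2 - \frac{|v|^2}{4 d_\omega^2} \right) dt \geq c_N\, \mathcal{F}_\omega(v),
\]
where the functional $\mathcal{F}_\omega$ should be calibrated so that, after integrating $\mathcal{F}_{\omega_{y,e}}(u(y+\cdot\, e))^{N/(N-2)}$ against $dy\, de$, one recovers a multiple of $\int_\Omega |u|^{2N/(N-2)}\,dx$. Because the classical 1D Hardy inequality is sharp on the half-line and admits no scale-invariant $L^p$ remainder for $p<\infty$, $\mathcal{F}_\omega$ cannot be an unweighted $L^{2N/(N-2)}(\omega)$ norm: it must carry a specific $d_\omega$-dependent geometric weight that integrates to a constant under the averaging over $e\in\Sph^{N-1}$. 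I would establish this 1D estimate via the ground state substitution $v = d_\omega^{1/2} w$, which on each connected component $(a,b)$ of $\omega$ gives
\[
\int_a^b \left( |v'|^2 - \frac{|v|^2}{4 d_\omega^2} \right) dt = \left|w\bigl(\tfrac{a+b}{2}\bigr)\right|^2 + \int_a^b d_\omega\, |w'|^2 \, dt,
\]
after which a weighted Sobolev--Bliss-type inequality for the right-hand side produces the required bound on $\mathcal{F}_\omega(v)$.

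Finally, Minkowski's integral inequality in $L^{N/(N-2)}(\Omega)$ --- legitimate because $\tfrac{N}{N-2}\geq 1$ for $N\geq 3$ --- moves the outer $L^{N/(N-2)}$-norm past the $de$ and $dy$ integrals and produces $(\int_\Omega |u|^{2N/(N-2)}\,dx)^{(N-2)/N}$ on the right, completing the proof. The main obstacle is the uniform calibration of $\mathcal{F}_\omega$ with a constant depending only on $N$: it must handle short, long, and unbounded components of $\omega$ simultaneously, and the improvement over Hardy's sharp constant $\tfrac14$ must decay at exactly the rate needed so that the recombined right-hand side matches the unweighted $L^{2N/(N-2)}(\Omega)$-norm of $u$.
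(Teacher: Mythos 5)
Your fiber decomposition and ground-state substitution are on target, and you correctly observe that a naive 1D estimate $\int(|v'|^2 - |v|^2/4d_\omega^2)\,dt \geq c\|v\|_q^2$ cannot hold. However, the fix you propose---a \emph{weighted} $L^q$ remainder plus Minkowski recombination---is not what the paper does and in fact cannot work, for two related reasons.

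First, the correct 1D substitute is not a weighted $L^q$ estimate but a \emph{pointwise product bound}. The paper's Proposition \ref{key} shows that for every $t\in[-1,1]$,
\begin{equation*}
|f(t)|^{q+2} \;\leq\; C_q \left(\int_{-1}^1\Bigl(|f'|^2 - \tfrac{|f|^2}{4(1-|s|)^2}\Bigr)ds\right)\left(\int_{-1}^1 |f|^q\,ds\right).
\end{equation*}
The essential feature is that the bound is uniform in $t$, i.e.\ it controls $\|f\|_\infty^{q+2}$ by the product of the Hardy deficit and $\|f\|_q^q$. This is exactly the form required by the classical Gagliardo--Nirenberg multilinear argument: one applies the bound along each of the $N$ coordinate axes, multiplies the resulting $N$ pointwise inequalities, and invokes the lemma $\|f_1(\tilde x_1)\cdots f_N(\tilde x_N)\|_{L^1(\R^N)} \leq \prod_j \|f_j\|_{L^{N-1}}$ (Lemma \ref{gn}), followed by Cauchy--Schwarz and the AM--GM inequality. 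The multiplicative structure is precisely what produces the critical exponent $q=2N/(N-2)$.

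Second, the additive averaging-plus-Minkowski recombination you propose goes in the wrong direction. If the 1D bound along a fiber were of the form $\int_{\text{fiber}}(\cdots)\,dt \geq c\bigl(\int_{\text{fiber}} |u|^q w\,dt\bigr)^{2/q}$, integrating over the base point $y\in e^\perp$ and $e\in\Sph^{N-1}$ yields $\int\int\bigl(\int|u|^q w\,dt\bigr)^{2/q}dy\,de$, and since $2/q<1$ the map $a\mapsto a^{2/q}$ is concave, so there is \emph{no} universal lower bound of this quantity by $\bigl(\int_\Omega |u|^q\,dx\bigr)^{2/q}$; a mass configuration concentrated on a thin set of fibers already defeats it. Moreover, a dimensional analysis shows that the recombined quantity cannot match the scaling of the unweighted $L^q$ norm unless the weight $w$ is strongly singular in $d_\omega$, in which case the right side would concentrate at $\partial\Omega$ and fail for $u$ supported in the bulk. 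Finally, note that in the paper the averaging over directions is performed last, after the Gagliardo--Nirenberg argument in a fixed frame, merely to turn $N^{-1}\sum_j d_j^{-2}$ into the spherical average $D_\Omega^{-2}$---it is not the mechanism by which the Sobolev exponent is produced.
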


We emphasize that the constant $K_N$ does not depend on $\Omega$. Hence \eqref{eq:distconv} yields \eqref{eq:hsmconv} with a constant independent of $\Omega$, thereby solving the problem posed in \cite{FiMaTe1}. Our proof of \eqref{eq:main} is constructive and gives an explicit value for $K_N$. We have nothing to say, however, about its \emph{sharp} value. Is the sharp value of \eqref{eq:hsmconv} given by that in \eqref{eq:hsmhp} for any convex $\Omega$? (This is true if $\Omega$ is a ball \cite{BeFrLo}.) 

If $\Omega$ has finite measure, then \eqref{eq:main} implies by means of H\"older's inequality that
\begin{equation}
 \label{eq:hhl}
\int_\Omega \left( |\nabla u|^2 - \frac{|u|^2}{4D_\Omega^2} \right) dx 
\geq K_N \ |\Omega|^{-\frac2N}  \int_\Omega |u|^2 \,dx \,.
\end{equation}
This inequality for convex $\Omega$ and with $D_\Omega(x)$ replaced by $\dist(x,\Omega^c)$ was the original question posed in the influential paper \cite{BrMa} by Brezis and Marcus. As an answer inequality \eqref{eq:hhl} was proved in \cite{HoHoLa}; see also \cite{FiMaTe2,AvWi} for further developments.

Another application of H\"older's inequality to \eqref{eq:main} yields
\begin{equation}
 \label{eq:mainint}
\left( \int_\Omega \left( |\nabla u|^2 - \frac{|u|^2}{4D_\Omega^2} \right) dx \right)^\theta 
\left( \int_\Omega |u|^2 \,dx \right)^{1-\theta}
\geq K_N^\theta \left( \int_\Omega |u|^q \,dx \right)^{\frac 2q},
\quad \theta=\tfrac N2 \left(1-\tfrac 2q\right) \,,
\end{equation}
for all $2\leq q\leq \frac{2N}{N-2}$. It turns out that this is the correct substitute of \eqref{eq:main} in dimensions one and two.

\begin{theorem}\label{main12}
 Let $N=1$ or $N=2$ and let $2\leq q<\infty$. Then there is a constant $K_{N,\theta}>0$ such that for any domain $\Omega\subsetneq\R^N$ and any $u\in C_0^\infty(\Omega)$
\begin{equation}
 \label{eq:main12}
\left( \int_\Omega \left( |\nabla u|^2 - \frac{|u|^2}{4D_\Omega^2} \right) dx \right)^\theta 
\left( \int_\Omega |u|^2 \,dx \right)^{1-\theta}
\geq K_{N,\theta} \left( \int_\Omega |u|^q \,dx \right)^{\frac 2q},
\quad \theta=\tfrac N2 \left(1-\tfrac 2q\right) \,.
\end{equation}
\end{theorem}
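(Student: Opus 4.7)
The plan is to reduce Theorem~\ref{main12} to Theorem~\ref{main} via a dimension-raising tensor product. Fix $M\ge 1$ with $N+M\ge 3$ (concretely $M=3-N$) and set $\tilde\Omega:=\Omega\times\R^M\subset\R^{N+M}$. A crucial preliminary is the weight identity $D_{\tilde\Omega}(x,y)=D_\Omega(x)$ for all $(x,y)\in\tilde\Omega$. One proves this by parametrizing $\Sph^{N+M-1}$ in bispherical coordinates $e=(\sin\alpha\,\omega,\cos\alpha\,\eta)$ with $\omega\in\Sph^{N-1}$, $\eta\in\Sph^{M-1}$, $\alpha\in[0,\pi/2]$; since $\tilde\Omega^c=\Omega^c\times\R^M$, one has $d_e(x,y)=d_\omega(x)/\sin\alpha$, and the resulting Beta-function integral combines with $|\Sph^{k-1}|=2\pi^{k/2}/\Gamma(k/2)$ and $\Gamma(z+1)=z\Gamma(z)$ to give prefactor exactly~$1$.

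Given $u\in C_0^\infty(\Omega)$ and any nonzero $\phi\in C_0^\infty(\R^M)$, the tensor $v(x,y):=u(x)\phi(y)$ lies in $C_0^\infty(\tilde\Omega)$. Separation of variables together with the weight identity gives
\[
\int_{\tilde\Omega}\Bigl(|\nabla v|^2-\tfrac{|v|^2}{4D_{\tilde\Omega}^{2}}\Bigr)dx\,dy=\|\phi\|_2^2\,E(u)+\|\nabla\phi\|_2^2\,\|u\|_2^2,
\]
where $E(u)$ denotes the bracketed integral on the left of \eqref{eq:main12}, and also $\int|v|^p=\|u\|_p^{p}\|\phi\|_p^{p}$. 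Applying Theorem~\ref{main} to $v$ therefore yields
\[
\|\phi\|_2^2\,E(u)+\|\nabla\phi\|_2^2\,\|u\|_2^2\ge K_{N+M}\,\|u\|_{2^*}^{2}\,\|\phi\|_{2^*}^{2},\qquad 2^*=\tfrac{2(N+M)}{N+M-2}.
\]
Substituting $\phi(\lambda\,\cdot)$ and optimizing $\lambda>0$ converts the sum on the left into a geometric mean (weighted AM-GM) and produces $E(u)^{N/(N+M)}\|u\|_2^{2M/(N+M)}\ge C\,\|u\|_{2^*}^{2}$. This is precisely \eqref{eq:main12} at $q=2^*$. With $M=3-N$ it reads $q=6$, and H\"older between $L^2$ and $L^6$ extends the bound to every $q\in[2,6]$.

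The main obstacle is the range $q\in(6,\infty)$, since increasing $M$ only drives $2^*$ closer to $2$. Here one passes to the heat semigroup. With $H_\Omega:=-\Delta-1/(4D_\Omega^{2})$, the tensor factorization $H_{\tilde\Omega}=H_\Omega\otimes I+I\otimes(-\Delta_{\R^M})$ gives $e^{-tH_{\tilde\Omega}}=e^{-tH_\Omega}\otimes e^{t\Delta_{\R^M}}$, and because the integral kernel factorizes,
\[
\|e^{-tH_{\tilde\Omega}}\|_{L^1\to L^\infty}=\|e^{-tH_\Omega}\|_{L^1(\Omega)\to L^\infty(\Omega)}\cdot(4\pi t)^{-M/2}.
\]
By the Varopoulos equivalence, the $\R^{N+M}$-dimensional Sobolev inequality supplied by Theorem~\ref{main} gives $\|e^{-tH_{\tilde\Omega}}\|_{L^1\to L^\infty}\le Ct^{-(N+M)/2}$, whence $\|e^{-tH_\Omega}\|_{L^1\to L^\infty}\le C'\,t^{-N/2}$, i.e.\ ultracontractivity of ``dimension'' $N$ for $H_\Omega$. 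A standard argument now delivers \eqref{eq:main12} for every $q\in[2,\infty)$: Riesz-Thorin on the semigroup gives $\|e^{-sH_\Omega}\|_{L^2\to L^q}\le Cs^{-\theta/2}$ with $\theta=\tfrac{N}{2}(1-\tfrac{2}{q})$, the subordination formula yields $\|(I+\lambda H_\Omega)^{-1/2}\|_{L^2\to L^q}\le C_q\lambda^{-\theta/2}$ (finite precisely because $\theta<1$ when $q<\infty$), and optimization of $\lambda>0$ in $\|u\|_q^2\le C_q^2\lambda^{-\theta}(\|u\|_2^2+\lambda E(u))$ produces the stated exponent. The hard part is therefore not the tensor product itself but the passage from the single endpoint $q=6$ to all $q<\infty$, which hinges on the standard ultracontractivity $\Leftrightarrow$ Gagliardo-Nirenberg framework.
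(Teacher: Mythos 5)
Your tensoring construction is a genuinely different route from the paper's, and the part covering $q\in[2,6]$ is correct and elegant: the weight identity $D_{\Omega\times\R^M}(x,y)=D_\Omega(x)$ does hold (the bispherical integral together with $\Gamma(z+1)=z\Gamma(z)$ applied twice makes the prefactor exactly $1$), the separation of variables and the optimization over the scaling $\phi(\lambda\,\cdot)$ produce exactly $\theta=N/(N+M)$ at $q=2(N+M)/(N+M-2)$, and H\"older against the trivial $q=2$ endpoint fills in $[2,6]$.

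For $q>6$, however, there is a genuine gap. The step ``Sobolev on $\tilde\Omega$ implies $\|e^{-tH_{\tilde\Omega}}\|_{L^1\to L^\infty}\le Ct^{-(N+M)/2}$'' invokes the Varopoulos--Nash equivalence, but that equivalence is a theorem about \emph{Markovian} semigroups: the Nash-iteration proof of Sobolev $\Rightarrow$ ultracontractivity uses $\|e^{-tH}f\|_1\le\|f\|_1$, i.e.\ $L^1$-contractivity, which is the second Beurling--Deny condition. For $H_{\tilde\Omega}=-\Delta-(2D_{\tilde\Omega})^{-2}$ the potential is nonpositive and critically singular at the boundary, and the semigroup is not in general a contraction on $L^1$ (this is the same reason the heat kernel of the critical Hardy operator on a half-line fails to be bounded in $L^1$). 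The paper is explicit about this obstruction in a closely related place: in the proof of Theorem~\ref{hclr} the authors state they do not know how to verify Assumption (c) for the $D_\Omega$ weight because no analogue of the ground-state representation \eqref{eq:gsr} is available, and they must replace (c) by the weaker integral-kernel condition (d), verified through Lemma~\ref{clr}. Your argument quietly assumes exactly the property the paper cannot establish (and the further step of reading off $\|e^{-tH_\Omega}\|_{L^1\to L^\infty}$ from the tensor factorization presupposes that $e^{-tH_\Omega}$ even has an integral kernel). The paper instead proves Theorem~\ref{main12} by a completely elementary route --- the one-dimensional Sobolev inequality with a Hardy term (Proposition~\ref{key} and Corollaries~\ref{keycor}, \ref{keycor2}), combined with slicing for $N=2$ and the bound $\|f\|_q^q\le\|f\|_\infty^{q-2}\|f\|_2^2$ for $N=1$ --- which yields all $q\in[2,\infty)$ with no semigroup input at all.
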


Of course, \eqref{eq:main12} implies \eqref{eq:hhl} also in dimensions one and two.

We also have a generalization of Theorem \ref{main} to powers $p\geq 2$ of the gradient. The relevant weight function is now
$$
D_{\Omega,p}(x) := \left( \frac{\sqrt\pi\ \Gamma(\tfrac{N+p}2)}{\Gamma(\tfrac{p+1}2)\ \Gamma(\tfrac N2)} \ |\Sph^{N-1}|^{-1} \int_{\Sph^{N-1}} d_e(x)^{-p} \,de \right)^{-\frac1p}
$$
with $d_e(x)$ as before. We note that for $p=2$ one has $D_{\Omega,2}= D_{\Omega}$. The analogue of \eqref{eq:hardy}, which is valid for any $p>1$ and any open domain $\Omega\subsetneq\R^N$, is
\begin{equation}
 \label{eq:hardyp}
\int_\Omega |\nabla u|^p \,dx 
\geq \left(\frac{p-1}p\right)^p \int_\Omega \frac{|u|^p}{(D_{\Omega,p})^p} \,dx \,,
\qquad u\in C_0^\infty(\Omega) \,.
\end{equation}
This implies, in particular, the $L^p$ Hardy inequality for convex domains \cite{MaSo}, since by the same argument leading to \eqref{eq:distconv} one sees that
\begin{equation}
 \label{eq:distconvp}
D_{\Omega,p}(x)\leq \dist(x,\Omega^c) \quad\text{if $\Omega$ is convex.}
\end{equation}
Hardy-Sobolev inequalities for $p>2$ and \emph{smooth, convex} domains were also studied in \cite{FiMaTe3}. The following theorem extends this to \emph{arbitrary} domains.

\begin{theorem}\label{mainp}
 Let $2\leq p<N$. There is a constant $K_{N,p}$ such that for any domain $\Omega\subsetneq\R^N$ and any $u\in C_0^\infty(\Omega)$
\begin{equation}
 \label{eq:mainp}
\int_\Omega \left( |\nabla u|^p - \left(\frac{p-1}{p}\right)^p \frac{|u|^p}{(D_{\Omega,p})^p} \right) dx 
\geq K_{N,p} \left( \int_\Omega |u|^\frac{Np}{N-p} \,dx \right)^{\frac{N-p}{N}} \,.
\end{equation}
\end{theorem}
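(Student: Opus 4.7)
The strategy is the same directional-averaging reduction that underlies \eqref{eq:hardyp}: express both the gradient and the Hardy weight as spherical averages, and then work line-by-line. A standard computation of $\int_{\Sph^{N-1}}|e\cdot\omega|^p\,de$ gives, for any unit $\omega\in\R^N$,
$$
\frac{1}{|\Sph^{N-1}|}\int_{\Sph^{N-1}}|e\cdot\omega|^p\,de \,=\, \frac{\Gamma(N/2)\,\Gamma((p+1)/2)}{\sqrt\pi\,\Gamma((N+p)/2)} \,=:\, C_{N,p},
$$
the very constant normalizing $D_{\Omega,p}$. Combined with the definition of $D_{\Omega,p}$, this yields the identity
$$
\int_\Omega\!\Bigl(|\nabla u|^p - \bigl(\tfrac{p-1}{p}\bigr)^p|u|^p/D_{\Omega,p}^p\Bigr)dx \,=\, \frac{1}{C_{N,p}|\Sph^{N-1}|}\int_{\Sph^{N-1}}\!\!\int_\Omega\!\Bigl(|e\cdot\nabla u|^p - \bigl(\tfrac{p-1}{p}\bigr)^p|u|^p/d_e^p\Bigr)dx\,de.
$$
For fixed $e$, Fubini decomposes the inner integral as a sum of one-dimensional integrals over the open intervals forming $\Omega\cap(x'+\R e)$, $x'\in e^\perp$, with $u$ vanishing at the endpoints and $d_e$ equal to the distance to $\partial I$ on each interval $I$.

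The key analytical step is then to prove a one-dimensional HSM-type inequality for $f\in C_0^\infty(I)$, $I\subset\R$ an open interval, of the form
$$
\int_I\Bigl(|f'|^p - \bigl(\tfrac{p-1}{p}\bigr)^p|f|^p/d_I^p\Bigr)dt \,\geq\, c_p\,\mathcal{R}_I[f],
$$
with a remainder $\mathcal{R}_I[f]$ chosen so that its directional average reconstructs $\bigl(\int_\Omega|u|^{Np/(N-p)}dx\bigr)^{(N-p)/N}$. The natural starting point is the ground-state substitution $f=d_I^{(p-1)/p}g$, under which $f'=d_I^{(p-1)/p}(g'+\tfrac{p-1}{p}d_I' g/d_I)$. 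For $p\geq 2$ the strong convexity inequality $|a+b|^p\geq|b|^p+p|b|^{p-2}ba+c_p|a|^p$ causes the Hardy singular term to cancel exactly and produces the weighted gradient remainder $c_p\int_I d_I^{p-1}|g'|^p\,dt$. A variant of this, which is likely more convenient for the reassembly step, replaces $\mathcal{R}_I$ by a weighted $L^q$ expression $\bigl(\int_I|f|^qw_I\,dt\bigr)^{p/q}$ whose exponent and weight are tuned so that direction-averaging produces precisely the target Sobolev norm.

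Integrating the one-dimensional bound over $x'\in e^\perp$ and then over $e\in\Sph^{N-1}$ produces a lower bound for the left side of \eqref{eq:mainp}; the final task is to identify this averaged expression with a constant multiple of $\bigl(\int_\Omega|u|^{Np/(N-p)}dx\bigr)^{(N-p)/N}$, at which point we invoke the Sobolev embedding $W^{1,p}(\R^N)\hookrightarrow L^{Np/(N-p)}(\R^N)$ in its spherically-averaged form. The principal obstacle is the coupling between $\mathcal{R}_I$ and the averaging: since the substituted function $g=u/d_e^{(p-1)/p}$ depends on $e$, the directional integrals of the weighted gradient $d_e^{p-1}|e\cdot\nabla g|^p$ do not reassemble in a direct way into an intrinsic norm of $u$, so one must either work with the $L^q$ form of $\mathcal{R}_I$ or compare the averaged weighted gradient pointwise to expressions involving $|\nabla u|$ and $|u|/D_{\Omega,p}$. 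A secondary technical issue is that $d_I$ is only Lipschitz, with $d_I'$ jumping at the midpoint of each interval; the ground-state substitution and the integration by parts must therefore be justified by regularization.
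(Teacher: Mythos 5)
Your plan has the right ingredients for the ground-state substitution (the elementary inequality $|a+b|^p \geq |b|^p + p|b|^{p-2}\re(\bar b a)+c_p|a|^p$ is indeed what the paper uses, citing \cite{FrSe}), but the reassembly step---which you yourself flag as the ``principal obstacle''---is not a technicality to be deferred; it is the crux, and the direction-averaging scheme you propose cannot close it. If the one-dimensional inequality has the form $\int_I(|f'|^p-(\tfrac{p-1}p)^p|f|^p/d_I^p)\,dt\geq c_p\,\mathcal R_I[f]$ and you integrate it over $x'\in e^\perp$ and average over $e\in\Sph^{N-1}$, the left side becomes (by the spherical-average identity) the HSM energy, but the right side becomes an \emph{additive} quantity
$$
\frac{1}{C_{N,p}|\Sph^{N-1}|}\int_{\Sph^{N-1}}\int_{e^\perp}\sum_{I\subset\Omega\cap(x'+\R e)}\mathcal R_I[u]\;dx'\,de,
$$
whereas the target $\bigl(\int_\Omega|u|^{Np/(N-p)}\,dx\bigr)^{(N-p)/N}$ is raised to a power strictly less than $1$ in $|u|^{Np/(N-p)}$ and hence genuinely sub-additive. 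No choice of weight $w_I$ and exponent in $\mathcal R_I[f]=\bigl(\int_I|f|^qw_I\bigr)^{p/q}$ makes the averaged left-hand side \emph{equal} to (or comparable from below to) the right-hand side; your fallback of invoking ``$W^{1,p}\hookrightarrow L^{Np/(N-p)}$ in its spherically-averaged form'' is circular, since the directional remainders involve the $e$-dependent substituted function $g_e=u/d_e^{(p-1)/p}$ and do not reconstitute $\|\nabla u\|_p$.

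The missing idea is precisely what the paper isolates in Proposition~\ref{keyp}: one must prove a one-dimensional estimate with a \emph{pointwise} left side,
$$
\sup_{t\in I}|f(t)|^{q(p-1)+p}\;\leq\;C_{p,q}\int_I\Bigl(|f'|^p-(\tfrac{p-1}p)^p\tfrac{|f|^p}{d_I^p}\Bigr)dt\;\Bigl(\int_I|f|^q\,dt\Bigr)^{p-1},
$$
i.e.\ the $L^p$-analogue of the one-dimensional Sobolev inequality discussed in Subsection~\ref{sec:strat}. With a sup-norm on the left one can multiply such bounds over $N$ coordinate directions, apply the Gagliardo--Nirenberg--Loomis--Whitney product lemma (Lemma~\ref{gn}) to integrate, then use H\"older/Schwarz plus the arithmetic--geometric mean inequality on the $\|g_j\|_1$'s, and only \emph{at the very last step} average over rotations of the coordinate frame to turn $N^{-1}\sum_j d_j^{-p}$ into the normalizing constant times $D_{\Omega,p}^{-p}$. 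Establishing the pointwise bound itself is also nontrivial for $p>2$: after the ground-state substitution, bounding $|g(0)|$ requires the cutoff argument of the paper (the two cases around \eqref{eq:case1}), since the identity-based integration by parts from the $p=2$ proof of Proposition~\ref{key} is not available. Your proposal contains neither of these ideas, so the gap is real.
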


Inequalities analogous to \eqref{eq:hhl} and \eqref{eq:main12} hold for $p>2$ as well.

We shall give the details of the proofs of Theorems \ref{main}, \ref{main12} and \ref{mainp} in Section \ref{sec:proofmain}. Before summarizing the proof strategy in Subsection \ref{sec:strat}, we would first like to give an application of these theorems to spectral problems in mathematical physics.


\subsection{Hardy-Lieb-Thirring inequalities}

To motivate the following inequalities we introduce a `duality parameter' (physically: a potential) $V\in L^{N/2}(\Omega)$. If $N\geq 3$ we can infer from H\"older's inequality and \eqref{eq:main} that
\begin{align*}
 \int_\Omega \left( |\nabla u|^2 - \frac{|u|^2}{4 D_\Omega^2} + V|u|^2 \right) dx 
& \geq \int_\Omega \left( |\nabla u|^2 - \frac{|u|^2}{4 D_\Omega^2} \right) dx - \|V_-\|_{\frac N2} \|u\|_{\frac{2N}{N-2}}^2 \\ 
& \geq \int_\Omega \left( |\nabla u|^2 - \frac{|u|^2}{4 D_\Omega^2} \right) dx \left( 1 - K_N^{-1} \|V_-\|_{\frac N2} \right) \,, 
\end{align*}
where $V_-=\max\{-V,0\}$ denotes the negative part of the potential. From this we conclude that the Schr\"odinger operator
\begin{equation}
 \label{eq:schrop}
-\Delta - (2 D_\Omega)^{-2} + V
\qquad\text{in}\ L^2(\Omega)
\end{equation}
with Dirichlet boundary conditions on $\partial\Omega$ has no negative eigenvalues if the potential satisfies $\|V_-\|_{\frac N2}\leq K_N$. The following theorem improves this by saying that not only the existence of negative eigenvalues but even their \emph{total number} is controlled in terms of the $L^{N/2}$-norm of the potential. In the case of the `usual' Schr\"odinger operator $-\Delta+V$, this is the famous inequality of Cwikel, Lieb and Rozenblum. We refer to the reviews \cite{LaWe,Hu} for references, motivations and applications of this inequality. Our new result is that this inequality remains valid (possibly up to a constant), even when the positive term $(2 D_\Omega)^{-2}$ is subtracted from the Laplacian. The precise statement is

\begin{theorem}\label{hclr}
 Let $N\geq 3$. There is a constant $L_N$ such that for any domain $\Omega\subsetneq\R^N$ and any $V\in L^{N/2}(\Omega)$ the number $N(-\Delta - (2D_\Omega)^{-2} + V)$ of negative eigenvalues (including multiplicities) of the operator \eqref{eq:schrop} is bounded by
\begin{equation}
 \label{eq:hclr}
N(-\Delta - (2 D_\Omega)^{-2} + V) \leq L_N \int_\Omega V_-^{\frac N2} \,dx \,.
\end{equation}
This inequality holds with $L_N= e^{\frac N2 -1} K_N^{-\frac N2}$, where $K_N$ is the constant from \eqref{eq:main}.
\end{theorem}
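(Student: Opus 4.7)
The plan is to deduce Theorem~\ref{hclr} from Theorem~\ref{main} via an abstract Cwikel--Lieb--Rozenblum argument. By the Hardy inequality~\eqref{eq:hardy}, the quadratic form $q(u) := \int_\Omega(|\nabla u|^2 - (2D_\Omega)^{-2}|u|^2)\,dx$ is nonnegative on $C_0^\infty(\Omega)$ and, via Friedrichs extension, defines a nonnegative self-adjoint operator $H_0$ on $L^2(\Omega)$. In this formulation Theorem~\ref{main} reads $\langle u, H_0 u\rangle\geq K_N\|u\|_{2N/(N-2)}^2$; that is, $H_0$ satisfies a Sobolev-type inequality with constant $K_N$. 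The statement of Theorem~\ref{hclr} thus reduces to a purely abstract principle: whenever a nonnegative self-adjoint operator $H_0$ on $L^2(\Omega,dx)$ satisfies a Sobolev-type inequality with constant $K$, the number of negative eigenvalues of $H_0+V$ is bounded by $e^{N/2-1}K^{-N/2}\int_\Omega V_-^{N/2}\,dx$ for every real $V$ with $V_-\in L^{N/2}(\Omega)$.

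For this abstract step I would follow Lieb's semigroup route. First, the Sobolev inequality, combined with Nash's interpolation trick, yields a Nash inequality for $H_0$; Varopoulos's theorem (integrating the resulting differential inequality for $\|e^{-tH_0}f\|_2$) then converts this into an ultracontractive heat-kernel bound $\|e^{-tH_0}\|_{1\to\infty}\leq c_N (Kt)^{-N/2}$ with an explicit dimensional constant $c_N$. Second, the standard Birman--Schwinger/Lieb proof of CLR now goes through with $-\Delta$ replaced by $H_0$: using the pointwise bound $\mathbf{1}_{(-\infty,0]}(s)\leq e^{-\tau s}$ in the functional calculus together with Golden--Thompson, $N(H_0+V)$ is controlled by a trace of the form $\tr\,e^{-\tau(H_0+V)}$, which via the heat-kernel bound and a pointwise optimization in $\tau$ reduces to an integral of $V_-^{N/2}$.

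The main obstacle is tracking constants carefully enough to arrive at precisely $L_N=e^{N/2-1}K_N^{-N/2}$. The factor $K_N^{-N/2}$ is forced dimensionally by the $N/2$-fold iteration of Sobolev inside the ultracontractive bound, and the factor $e^{N/2-1}$ emerges from the one-dimensional optimization $\inf_{\tau>0}\tau^{-N/2}e^{\tau s}=(2es/N)^{N/2}$ at $\tau^\star=N/(2s)$, combined with the specific value of $c_N$ produced by the Nash--Varopoulos step. A conceptually cleaner alternative would avoid the heat semigroup entirely and estimate the Birman--Schwinger operator $V_-^{1/2}(H_0+\epsilon)^{-1}V_-^{1/2}$ in a weak Schatten class via Cwikel's inequality, using that the Sobolev estimate bounds $(H_0+\epsilon)^{-1/2}\colon L^2\to L^{2N/(N-2)}$ in norm by $K_N^{-1/2}$; however, this route typically yields a worse constant than the heat-kernel-based one.
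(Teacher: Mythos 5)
Your overall blueprint is correct and matches the paper's philosophy: one wants to pass from the Sobolev-type inequality of Theorem~\ref{main} for the operator $H_0 = -\Delta - (2D_\Omega)^{-2}$ to a CLR bound with the specific constant $e^{N/2-1}K_N^{-N/2}$. The paper does exactly this, but by citing the abstract equivalence theorem of Frank--Lieb--Seiringer \cite{FrLiSe2} (Theorem~\ref{clrweighted} with $\kappa = N/2$), which produces precisely this constant. That theorem requires more than the Sobolev inequality: it needs the generalized Beurling--Deny conditions (Assumption~\ref{ass:bd}), and it is the verification of these conditions that carries essentially all of the technical weight of the proof.

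Here is where your proposal has a genuine gap. The Nash--Varopoulos route you sketch needs the semigroup $e^{-tH_0}$ to be a contraction on $L^1$ (equivalently on $L^\infty$), so that one can close the differential inequality $\frac{d}{dt}\|u_t\|_2^2 \leq -cK\|u_t\|_2^{2+4/N}\|u_t\|_1^{-4/N}$ by keeping $\|u_t\|_1$ under control. For the Dirichlet Laplacian this follows from Markovianity, but for $H_0 = -\Delta - (2D_\Omega)^{-2}$ the subtraction of the (singular, not-small) Hardy potential destroys the $L^1$-contraction property. The operator is critical: its form is nonnegative by \eqref{eq:hardy} but it has no spectral gap, and $e^{-tH_0}$ need not even be bounded on $L^\infty$. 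The resolution in \cite{FrLiSe2} is to pass to a ground-state--like representation, i.e.\ to view $t[v\omega]$ as a Dirichlet form on a weighted $L^2$-space. The Beurling--Deny condition~(c) (truncation against the weight $\omega$) is exactly what encodes the Markov property of this transformed form. For convex $\Omega$ the paper can take $\omega=\sqrt{\dist(\cdot,\Omega^c)}$ and verify (c) via the ground-state identity \eqref{eq:gsr}, but for general domains with the weight $D_\Omega$ there is no analogue of \eqref{eq:gsr}, and condition~(c) cannot be checked. The paper therefore works with the substitute condition~(d) --- that $\exp(-\beta\Upsilon)$ is an integral operator --- which in turn requires a regularization $t_\epsilon$ (subtracting only $(1-\epsilon)(2D_\Omega)^{-2}$), a Hilbert--Schmidt bound obtained by comparing with $\epsilon\,W^{-1/2}(-\Delta)W^{-1/2}$ via Davies' Hardy inequality (Lemma~\ref{clr}), and a strong-resolvent limit $\epsilon\to 0$. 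None of this appears in your outline, and without it the semigroup machinery you invoke does not apply.

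Two smaller remarks. First, the specific constant $e^{N/2-1}K_N^{-N/2}$ cannot be extracted from a naive Nash$\to$ultracontractivity$\to$Golden--Thompson chain: that route loses a dimension-dependent factor in the Nash step. The constant $e^{\kappa-1}S^{-\kappa}$ of \cite{FrLiSe2} comes from the Li--Yau style argument, carefully optimized in the weighted setting, not from the Varopoulos iteration. Second, your ``conceptually cleaner alternative'' via Cwikel's inequality does not apply here: Cwikel's theorem requires the operator to be a function of $-i\nabla$ (a Fourier multiplier), so that one can estimate $f(x)g(-i\nabla)$ in weak Schatten norm. The operator $(H_0+\epsilon)^{-1/2}$ has no such convolution structure, and a Sobolev mapping bound alone gives no Schatten information.
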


For example, choosing $V=(2 D_\Omega)^{-2}-\mu$, where $\mu$ is a positive constant, we infer that the number of eigenvalues less than $\mu$ of the Dirichlet Laplacian on $\Omega$ is bounded by
$$
N(-\Delta -\mu) \leq L_N \int_\Omega \left((2 D_\Omega)^{-2}-\mu\right)_-^{\frac N2} \,dx \,.
$$
Since the latter integral can be bounded as follows,
$$
\int_\Omega \left((2 D_\Omega)^{-2}-\mu\right)_-^{\frac N2} \,dx \leq \mu^\frac N2 \ \left|\{ x\in\Omega :\ D_\Omega(x) > (4\mu)^{-\frac12} \} \right| \,,
$$
this quantifies in a uniform way the observation that because of the Dirichlet conditions most of the eigenvalues come from the bulk of $\Omega$.

It is well-known that an inequality of the form \eqref{eq:hclr} implies inequalities for moments of the negative eigenvalues $E_j$ of the operator \eqref{eq:schrop}, namely,
\begin{equation}
 \label{eq:hlt}
\sum_j |E_j|^\gamma \leq L_{N,\gamma} \int_\Omega V_-^{\gamma+\frac N2} \,dx \,.
\end{equation}
Here $\gamma>0$, and the sum runs over all (including multiplicities) negative eigenvalues of $-\Delta - (2D_\Omega)^{-2} + V$. When the term $(2D_\Omega)^{-2}$ is absent, these inequalities go back to Lieb and Thirring \cite{LiTh}. Just like \eqref{eq:mainint} is the appropriate consequence of \eqref{eq:main} that can be generalized to dimensions one and two, inequality \eqref{eq:hlt} remains valid in these dimensions.

\begin{theorem}\label{hlt12}
 Let $\gamma>1/2$ if $N=1$ and $\gamma> 0$ if $N=2$. Then there is a constant $L_{N,\gamma}$ such that for any domain $\Omega\subsetneq\R^N$ and any $V\in L^{\gamma+N/2}(\Omega)$ the negative eigenvalues $E_j$ of the operator \eqref{eq:schrop} are bounded by \eqref{eq:hlt}.
\end{theorem}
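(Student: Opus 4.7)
The plan is to adapt the derivation of Theorem~\ref{hclr} to the Gagliardo--Nirenberg setting of Theorem~\ref{main12}. The principal obstruction compared to $N\ge 3$ is that pure eigenvalue counting necessarily fails in dimensions one and two (even a small attractive bump on $\Omega$ produces a bound state), so one must bound the $\gamma$-moments directly rather than bound the number of negative eigenvalues and then invoke Aizenman--Lieb.

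Writing $H_0 := -\Delta - (2D_\Omega)^{-2}$, which is a nonnegative self-adjoint operator on $L^2(\Omega)$ by Davies' inequality \eqref{eq:hardy}, the inequality \eqref{eq:main12} takes the form of a Gagliardo--Nirenberg bound
\begin{equation*}
 \|u\|_q^{2} \le K_{N,\theta}^{-1}\langle u, H_0 u\rangle^{\theta}\|u\|_2^{2(1-\theta)}, \qquad \theta=\tfrac N2\bigl(1-\tfrac 2q\bigr),
\end{equation*}
valid for any $q\in(2,\infty)$. Rescaling $H_0\mapsto H_0+\tau$ for $\tau>0$ turns this into a mapping estimate $(H_0+\tau)^{-1/2}\colon L^2(\Omega)\to L^q(\Omega)$ with an explicit $\tau$-decay rate. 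The Birman--Schwinger principle together with Chebyshev's inequality then gives, for every $\tau>0$ and every $s\ge 1$,
\begin{equation*}
 \mathcal N(-\tau):=\#\{j: E_j\le -\tau\}\ \le\ \Tr \bigl(V_-^{1/2}(H_0+\tau)^{-1}V_-^{1/2}\bigr)^s.
\end{equation*}
Taking $s$ large enough to absorb the diagonal singularity of the resolvent kernel of $H_0$, the mapping estimate above and H\"older's inequality in the Schatten classes produce a bound $\mathcal N(-\tau)\le C\tau^{-\gamma_0}\int_\Omega V_-^{\gamma_0+N/2}\,dx$ for some base exponent $\gamma_0$ in the admissible range. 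Inserting this into the layer-cake identity $\sum_j |E_j|^{\gamma_0}=\gamma_0\int_0^\infty \tau^{\gamma_0-1}\mathcal N(-\tau)\,d\tau$ yields \eqref{eq:hlt} at $\gamma=\gamma_0$, and the Aizenman--Lieb monotonicity argument -- based on the identity $|E|^\gamma = c\int_0^\infty \mu^{\gamma-\gamma_0-1}(E+\mu)_-^{\gamma_0}\,d\mu$ applied to each eigenvalue -- extends the bound to all $\gamma\ge\gamma_0$.

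The main technical obstacle is obtaining the Schatten estimate for the Birman--Schwinger operator without an explicit resolvent kernel for $H_0$: because $H_0$ is not a bare Laplacian, the usual Kato--Seiler--Simon inequalities do not apply directly, and one must argue abstractly through the $L^2\to L^q$ mapping properties of $(H_0+\tau)^{-1/2}$ that come from \eqref{eq:main12}. The delicate point is calibrating $q$ and $s$ so that the base exponent $\gamma_0$ can be taken arbitrarily close to $\tfrac 12$ when $N=1$ and arbitrarily close to $0$ when $N=2$, thereby covering the full range of $\gamma$ stated in the theorem.
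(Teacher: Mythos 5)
Your plan correctly identifies the overall shape of the problem (derive a $\gamma$-moment bound from the Gagliardo--Nirenberg inequality \eqref{eq:main12}, using a Birman--Schwinger\slash layer-cake mechanism and then Aizenman--Lieb monotonicity), and this is indeed in the spirit of the paper. But the central step, as you state it, does not go through, and the gap lands exactly on the issue that drives the paper's actual argument.

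The problem is the sentence asserting that ``the mapping estimate above and H\"older's inequality in the Schatten classes produce a bound $\mathcal N(-\tau)\le C\tau^{-\gamma_0}\int_\Omega V_-^{\gamma_0+N/2}\,dx$.'' An $L^2\to L^q$ boundedness estimate for $(H_0+\tau)^{-1/2}$, combined with H\"older, gives at most an \emph{operator-norm} bound on $V_-^{1/2}(H_0+\tau)^{-1}V_-^{1/2}$, not a Schatten-norm bound. To get Schatten estimates from a Sobolev-type inequality one needs additional structure on the semigroup: positivity preservation and the existence of an integral kernel (the Beurling--Deny conditions, or their substitute). A Sobolev inequality alone for a non-negative self-adjoint $T$ does \emph{not} imply a CLR or Lieb--Thirring bound for $T+V$; this is precisely the point of the abstract machinery in \cite{FrLiSe2}, and it is why the paper verifies Assumption~\ref{ass:bd} (respectively its replacement~(d)) before invoking the equivalence theorem. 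The Kato--Seiler--Simon route you implicitly gesture at requires translation-invariance of $H_0$ and is not available here.

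The paper resolves this by appealing to Corollary~\ref{ltweighted} (the interpolation version of Theorem~\ref{clrweighted}) for the regularized forms $t_\epsilon$ from \eqref{eq:teps}, checking (a), (b), (i$'$) with $\epsilon$-independent constants, and then verifying condition~(d) --- that $\exp(-\beta\Upsilon^{(\tau)})$ is an integral operator for every positive weight $W\in L^1\cap L^\infty$ --- through the Hilbert--Schmidt estimate in Lemma~\ref{clr}, which compares $H_\epsilon^{(\tau)}$ with $W^{-1/2}(-\epsilon\Delta+\tau)W^{-1/2}$ via Davies' inequality. That verification is the technical heart of the proof and is entirely absent from your sketch. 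If you want a self-contained argument along your lines, you would need to reproduce this: first establish, from the form structure, an on-diagonal kernel or Hilbert--Schmidt bound for the semigroup of $H_0+\tau$, and only then pass to the Birman--Schwinger operator; the $L^2\to L^q$ mapping estimate alone cannot carry this burden.
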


It is quite likely that \eqref{eq:hlt} remains valid for $N=1$ and $\gamma=1/2$, but we did not try to prove this. We emphasize that the main point of \eqref{eq:hlt} is its universality, being valid for any $\Omega$ and $V$ and even for small values of $\gamma$. On the other hand, in the special case of $V\equiv\ $const and for $\gamma\geq 3/2$, much more precise information about the influence of the boundary on Lieb-Thirring inequalities is available; see, e.g., the recent paper \cite{GeLaWe} and references therein.

Hardy-Lieb-Thirring inequalities of the form \eqref{eq:hlt}, but with a Hardy term becoming singular at a single point, were first derived in \cite{EkFr} and found later an application to the physical problem of stability of matter \cite{FrLiSe1}; see also \cite{Fr}. The papers \cite{FrLiSe1} and \cite{FrLiSe2} develop an approach how to deduce Lieb-Thirring-type inequalities from (a-priori weaker) Sobolev-type inequalities. Theorems \ref{hclr} and \ref{hlt12}, which appear here for the first time, were actually a main motivation for developing this abstract approach. 


\subsection{Strategy of the proof}\label{sec:strat}

In order to motivate our argument, we first review the classical proof by Gagliardo and Nirenberg of the Sobolev inequality. For simplicity we restrict ourselves to dimension $N=3$ and we want to prove that
\begin{equation}\label{eq:sob}
\int_{\R^3} |\nabla u|^2 \,dx \geq S \left( \int_{\R^3} |u|^6 \,dx \right)^{1/3} \,.
\end{equation}
The starting point of the proof is the one-dimensional Sobolev inequality
\begin{equation}
 \label{eq:sobl11}
|f(t)| \leq \frac12 \int_\R |f'|\,ds \,,
\qquad f\in C_0^\infty(\R) \,,
\end{equation}
which comes from the formula $f(t)=\frac12 \left( \int_{-\infty}^t f(s)\,ds - \int_{-\infty}^t f(s)\,ds \right)$. Now given a function $v\in C_0^\infty(\R^3)$ we apply the one-dimensional inequality to the three one-dimensional functions $t\mapsto v(t,x_2,x_3)$, $t\mapsto v(x_1,t,x_3)$ and $t\mapsto v(x_1,x_2,t)$ and we obtain
$$
|v(x)|^3 \leq \frac18\ \rho_1(x_2,x_3)\ \rho_2(x_1,x_3)\ \rho_3(x_1,x_2) \,,
$$
where
$$
\rho_1(x_2,x_3) := \int_\R \left|\frac{\partial}{\partial x_1} v(t,x_2,x_3)\right| \,dt \,,
$$
and similarly for $\rho_2$ and $\rho_3$. Then the Schwarz and the arithmetic-geometric mean inequalities imply that
$$
\int_{\R^3} |v|^{\frac32} \,dx
\leq 8^{-\frac12} \prod_{j=1}^3 \|\rho_j\|_1^{\frac12} 
\leq 8^{-\frac12} \left( \frac13\sum_{j=1}^3 \|\rho_j\|_1 \right)^\frac32
= 8^{-\frac12} \left( \frac13 \sum_{j=1}^3 \int_{\R^3} \left|\frac{\partial v}{\partial x_j} \right| \,dx \right)^\frac32 \,.
$$
This is an $L^1$ Sobolev inequality, and in order to arrive at the $L^2$ Sobolev inequality \eqref{eq:sob} we set $v=u^4$ and estimate
$$
\sum_{j=1}^3 \int_{\R^3} \left|\frac{\partial v}{\partial x_j} \right| \,dx
= 4 \sum_{j=1}^3 \int_{\R^3} \left|\frac{\partial u}{\partial x_j} \right| |u|^3 \,dx 
\leq 4 \sqrt3 \left( \int_{\R^3} |\nabla u|^2 \,dx\right)^\frac12 \left( \int_{\R^3} |u|^6 \,dx \right)^\frac12 \,.
$$
Finally dividing by $\|u\|_6^3$, we obtain \eqref{eq:sob}.

The simple observation, which is behind our proof of Theorem \ref{main}, is that one can reverse the order of the steps in the above argument. Namely, one can set already $f=g^4$ in the one-dimensional Sobolev inequality, which then becomes
\begin{equation}
 \label{eq:sobl21}
|g(t)|^4 \leq 2 \left( \int_\R |g'|^2 \,ds\right)^\frac12 \left( \int_\R |g|^6 \,dx\right)^\frac12 \,,
\qquad g\in C_0^\infty(\R) \,.
\end{equation}
Now we obtain
$$
|u(x)|^{12} \leq 8 \ \phi_1(x_2,x_3)^\frac12\ \psi_1(x_2,x_3)^\frac12\ \phi_2(x_1,x_3)^\frac12\ \psi_2(x_1,x_3)^\frac12\ \phi_3(x_1,x_2)^\frac12\ \psi_3(x_1,x_2)^\frac12 \,,
$$
with
$$
\phi_1(x_2,x_3) := \int_\R \left|\frac{\partial}{\partial x_1} u(t,x_2,x_3)\right|^2 \,dt\,,
\qquad
\psi_1(x_2,x_3) := \int_\R |u(t,x_2,x_3)|^6 \,dt \,,
$$
and similarly for the remaining functions. As before, from H\"older's inequality we get
$$
\int_{\R^3} |u|^6 \,dx
\leq 8^{\frac12} \prod_{j=1}^3 \left\|\phi_j^\frac12 \psi_j^\frac12 \right\|_1^{\frac12} \,.
$$
Next, we apply the Schwarz inequality $\|\phi_j^\frac12 \psi_j^\frac12\|_1 \leq \|\phi_j\|_1^\frac12 \|\psi_j\|_1^\frac12$, we note that $\|\psi_j\|_1=\|u\|_6^6$ and we apply the geometric-arithmetic mean inequality to conclude that
\begin{align*}
 \int_{\R^3} |u|^6 \,dx
& \leq 8^{\frac12} \|u\|_6^\frac92 \prod_{j=1}^3 \|\phi_j\|_1^\frac14
\leq 8^{\frac12} \|u\|_6^\frac92 \left( \frac13\sum_{j=1}^3 \|\phi_j\|_1 \right)^\frac34 \\
& = 8^{\frac12} \|u\|_6^\frac92 \left( \frac13 \int_{\R^3} |\nabla u|^2 \,dx \right)^\frac34 \,,
\end{align*}
which is our desired goal \eqref{eq:sob}.

The upshot of this discussion is that in order to arrive at the $L^2$ Sobolev inequality \eqref{eq:sob} (which is weaker than the $L^1$ Sobolev inequality) we only need the one-dimensional $L^2$ Sobolev inequality \eqref{eq:sobl21}, and not the one-dimensional $L^1$ Sobolev inequality \eqref{eq:sobl11}. This simple observation is of relevance for us because there is not even a Hardy inequality, i.e., the inequality
$$
\int_{-1}^1 |f'(x)| dx \ge C \int_{-1}^1 \frac{|f(x)|}{1-|x|} dx
$$
is false no matter how small the constant $C$.  However, and this is our technical key result (Proposition \ref{key}), we can prove a one-dimensional $L^2$ Sobolev-type inequality with a Hardy term! In fact such inequalities hold for all $p \ge 2$ (Proposition \ref{keyp}). For $1 < p <2$ there is a Hardy inequality, however, it is not known whether any version of Proposition \ref{keyp} might hold for $1 < p< 2$. Once such an inequality is established, the rest of the argument outlined above yields Hardy-Sobolev-Mazy'a inequalities also for $1< p< 2$.

\medskip
\noindent
{\bf Acknowledgment:} The work of M.L. is partially funded by NSF grant DMS--901304.


\section{Proof of Theorems \ref{main}, \ref{main12} and \ref{mainp}}\label{sec:proofmain}

\subsection{A one-dimensional inequality}

The following inequality is the key for proving Theorem \ref{main}.

\begin{proposition}\label{key}
 Let $q\geq2$. There is a constant $C_q$ such that for every $f\in C_0^\infty(-1,1)$ and for every $t\in [-1,1]$ one has
\begin{equation}
 \label{eq:key}
|f(t)|^{q+2} \leq C_q \int_{-1}^1 \left(|f'|^2 -\frac{|f|^2}{4(1-|s|)^2} \right)\,ds 
\ \int_{-1}^1 |f|^q \,ds \, ,
\end{equation}
with $C_q \le (q+2)^2$.
\end{proposition}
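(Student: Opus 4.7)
The plan is to combine the ground-state substitution for the Hardy term with a Gagliardo--Nirenberg-type argument. Writing $f = \sqrt{1-|s|}\,g$ and integrating by parts (paying attention to the $\sgn(s)$-jump at $s=0$), the Hardy-modified energy can be rewritten as
\[
A := \int_{-1}^1\left(|f'|^2 - \frac{|f|^2}{4(1-|s|)^2}\right)ds = \int_{-1}^1 (1-|s|)|g'|^2\,ds + |g(0)|^2.
\]
Let also $B = \int_{-1}^1|f|^q\,ds$. By the reflection $s\mapsto -s$ we may assume the supremum of $|f|$ lies in $[0,1]$ and work on the right half; the one-sided energy $A_+ = \int_0^1(1-s)|g'|^2\,ds + \tfrac12|g(0)|^2$ then satisfies $A_+ \le A$.

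Consider the auxiliary function $H(s) = |f(s)|^{(q+2)/2}$ on $[0,1]$, which satisfies $H(1) = 0$ and the pointwise bound $|H'| \le \tfrac{q+2}{2}|f|^{q/2}|f'|$. Since $H^2 = |f|^{q+2}$ vanishes at $s=1$, the fundamental theorem together with Cauchy--Schwarz gives, for every $t \in [0,1]$,
\[
|f(t)|^{q+2} = |H(t)|^2 \le 2\,\|H\|_{L^2(0,1)}\,\|H'\|_{L^2(0,1)}.
\]
The first factor is immediate: $\|H\|_{L^2(0,1)}^2 = \int_0^1|f|^{q+2}\,ds \le \|f\|_\infty^2\,B$.

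The technical heart of the proof is the estimate
\[
\int_0^1 |f|^q|f'|^2\,ds \le \|f\|_\infty^q\, A_+,
\]
equivalently $\|H'\|_{L^2(0,1)}^2 \le \tfrac{(q+2)^2}{4}\|f\|_\infty^q A$. To establish it I would expand $(f')^2 = g^2/(4(1-s)) - gg' + (1-s)(g')^2$ via the ground state (with the obvious modification for complex $f$), multiply by $(1-s)^{q/2}|g|^q$, and integrate over $[0,1]$. The cross term is handled by writing $|g|^q g g' = (|g|^{q+2})'/(q+2)$ and integrating by parts; this produces a boundary contribution $|f(0)|^{q+2}/(q+2)$ plus an interior integral whose coefficient in front of $\int (1-s)^{q/2-1}|g|^{q+2}\,ds$ works out to exactly $(2-q)/(4(q+2))$. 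For $q \ge 2$ this coefficient is non-positive and the term may be discarded; after the pointwise bound $|f|^q \le \|f\|_\infty^q$ the claim reduces to $|f(0)|^2/(q+2) + \int(1-s)|g'|^2\,ds \le A_+$, which in turn is just the trivial inequality $1/(q+2) \le 1/2$.

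Setting $X = \|f\|_\infty^{q+2}$ and combining the two ingredients, the exponents collapse to yield $|f(t)|^{q+2} \le (q+2)\,X^{1/2}\sqrt{AB}$ for every $t$; taking the supremum over $t$ gives $X \le (q+2)^2 AB$, and hence $|f(t)|^{q+2} \le (q+2)^2\, A\, B$ as desired. The main subtlety lies in the sign of the discarded coefficient $(2-q)/(4(q+2))$: this is precisely where the hypothesis $q \ge 2$ enters, and it is the very obstruction (anticipated in Subsection~\ref{sec:strat}) that prevents this line of argument from reaching the analogous statement for $1 < p < 2$.
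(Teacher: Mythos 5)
Your proof is correct, and it takes a genuinely different route from the paper's, even though both start from the ground-state substitution $f=\sqrt{1-|s|}\,g$. The paper first reduces the claim to the single point $t=0$ (using that $(1-t)^{(q+2)/4}$ is decreasing, followed by Cauchy--Schwarz), and then bounds $|g(0)|^{(q+2)/2}$ by representing it as an integral against a weight $(1-s)^\alpha$ and choosing the free parameter $\alpha=\frac{(q+2)^2}{4(q+1)}$ so that a troublesome interior term vanishes identically. You instead work directly with the auxiliary function $H=|f|^{(q+2)/2}$, apply the one-dimensional Gagliardo--Nirenberg bound $|H(t)|^2\le 2\|H\|_2\|H'\|_2$, and then prove the key pointwise-to-integral estimate $\int_0^1|f|^q|f'|^2\,ds\le\|f\|_\infty^q A_+$ by expanding $|f'|^2$ via the ground state and integrating the cross term by parts, discarding an interior term whose coefficient $(2-q)/(4(q+2))$ is non-positive precisely when $q\ge 2$. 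I checked your computations: the coefficient is right, the boundary term $|f(0)|^{q+2}/(q+2)$ is right, and the exponents close up to give $\|f\|_\infty^{q+2}\le(q+2)^2AB$, matching the paper's constant. One virtue of your argument is that it isolates exactly where $q\ge2$ is used (the sign of the discarded interior term), whereas the paper's parametrized computation does not seem to use $q\ge2$ explicitly at all (the needed inequalities $\frac{(q+2)^2}{2(q+1)}-\frac q2\ge1$ and $\frac{q+2}{2(q+1)}\le1$ hold for all $q\ge 0$); the paper's approach buys a slicker reduction to the endpoint and suggests where a sharper constant might be extracted by not throwing anything away. A small caveat: the coefficient $(2-q)/(4(q+2))$ you name is the \emph{combined} coefficient of $\int(1-s)^{q/2-1}|g|^{q+2}\,ds$ coming from both the $|g|^2/(4(1-s))$ term and the integrated-by-parts cross term; the cross term alone contributes $-q/(2(q+2))$, and the $+1/4$ comes from the Hardy piece. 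Worth stating explicitly if this is written up.
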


\begin{proof}
We begin by noting that if we write $f(t)=\sqrt{1-|t|} \ g(t)$, then what we have to show is that
$$
|g(t)|^{q+2} \leq  (q+2)^2 (1-|t|)^{-\frac{q+2}{2}} \left( \int_{-1}^1 |g'|^2 (1-|s|)\,ds + |g(0)|^2 \right) \int_{-1}^1 |g|^q (1-|s|)^\frac q2 \,ds \,.
$$
By symmetry it suffices to prove this for $t\in[0,1]$ only. Now for such $t$ we can use the fact that $(1-t)^{\frac{q+2}{4}}$ is decreasing and we find that
\begin{align*}
|g(t)|^\frac{q+2}{2} - |g(0)|^\frac{q+2}{2} 
& \leq \tfrac{q+2}{2} \int_0^t |g|^\frac q2 |g'| \,ds \\ 
& \leq \tfrac{q+2}{2} (1-t)^{-\frac{q+2}{4}} \int_0^1 |g|^\frac q2 |g'| (1-s)^{\frac{q+2}{4}}\,ds \\
& \leq \tfrac{q+2}{2} (1-t)^{-\frac{q+2}{4}} \left( \int_0^1 |g'|^2 (1-s)\,ds \right)^{\frac12} \left( \int_0^1 |g|^q (1-s)^{\frac q2} \,ds \right)^{\frac12} \,.
\end{align*}
Thus it remains to show that
\begin{equation}\label{eq:goal}
|g(0)|^{q+2} \leq \frac{(q+2)^2}{4} \left( \int_{-1}^1 |g'|^2 (1-|s|)\,ds + |g(0)|^2 \right) \int_{-1}^1 |g|^q (1-|s|)^{\frac q2} \,ds \,.
\end{equation}
Of course, it suffices to show this if $g$ is non-negative, which is what we will assume henceforth. Let $\alpha$ be a parameter (to be specified later). Since $(1-s)^\alpha g(s)^{(q+2)/2}$ vanishes near $s=1$ we can write
\begin{align*}
|g(0)|^\frac{q+2}{2} 
&  = - \int_0^1 \left(\tfrac{q+2}{2} (1-s)^\alpha g(s)^\frac{q}{2} g'(s) -\alpha (1-s)^{\alpha-1} g(s)^\frac{q+2}{2} \right) \,ds \\
&  = - \int_0^1 g(s)^\frac{q}{2} (1-s)^\frac{q}{4} \left( \tfrac{q+2}{2} (1-s)^{\alpha-\frac{q}{4}}  g'(s) -
\alpha g(s)(1-s)^{\alpha-\frac{q+4}{4}} \right)\, ds \, .
\end{align*}
Using the Schwarz inequality we find
\begin{eqnarray*}
|g(0)|^\frac{q+2}{2} \leq \left( \int_0^1 g(s)^{q} (1-s)^\frac{q}{2} ds\right)^{1/2} T^{1/2}
\end{eqnarray*}
with
\begin{align*}
T & = \int_0^1 \left( \tfrac{(q+2)^2}{4} (1-s)^{2\alpha-\frac{q}2}  g'^2
- (q+2) \alpha (1-s)^{2\alpha-\frac{q+2}2}  g g' + \alpha^2 g^2 (1-s)^{2\alpha-\frac{q+4}{2}} 
\right) ds \\
& = \int_0^1 \left( \tfrac{(q+2)^2}{4} (1-s)^{2\alpha-\frac{q}2}  g'^2
- \tfrac{q+2}2 \alpha (1-s)^{2\alpha-\frac{q+2}2}  (g^2)' + \alpha^2 g^2 (1-s)^{2\alpha-\frac{q+4}{2}} 
\right) ds \\
& = \int_0^1 \left( \tfrac{(q+2)^2}{4} (1-s)^{2\alpha-\frac{q}2}  g'^2
+ \alpha \left( \alpha - \tfrac{q+2}2 \left(2\alpha-\tfrac{q+2}2\right)\right) (1-s)^{2\alpha-\frac{q+4}{2}}                  g^2 \right) ds
+ \alpha \tfrac{q+2}2 g(0)^2 \,.
\end{align*}
Now we pick $\alpha$ such that $\alpha - \frac{q+2}2 (2\alpha-\frac{q+2}2) = 0$, which leads to
$$
\alpha = \tfrac{(q+2)^2}{4(q+1)} \,.
$$
Hence we have
$$
|g(0)|^\frac{q+2}{2} \leq \tfrac{q+2}{2} \left( \int_0^1  g^{q} (1-s)^\frac{q}{2} ds\right)^{1/2} 
\left( \int_0^1 g'(s)^2 (1-s)^{\frac{(q+2)^2}{2(q+1)}-\frac{q}2} ds
+ \tfrac{q+2}{2(q+1)} g(0)^2 \right)^{1/2} ,
$$
which, since $\frac{(q+2)^2}{2(q+1)}-\frac{q}2 \geq 1$ and $\tfrac{q+2}{2(q+1)} \leq 1$, is bounded above by
$$
\tfrac{q+2}{2} \left( \int_0^1  (1-s)^\frac{q}{2} g(s)^{q} ds\right)^{1/2} 
\left( \int_0^1 g'(s)^2 (1-s) ds + g(0)^2 \right)^{1/2} \,.
$$
This proves the claimed inequality \eqref{eq:goal}.
\end{proof}

\begin{corollary}\label{keycor}
Let $q\geq2$. Then, with the same constant $C_q$ as in \eqref{eq:key}, one has for every open set $\Omega\subsetneq\R$ and for every $f\in C_0^\infty(\Omega)$
\begin{equation}
 \label{eq:keycor}
\sup_{t\in\Omega} |f(t)|^{q+2} \leq C_q \int_\Omega \left(|f'|^2 -\frac{|f|^2}{4\dist(t,\Omega^c)^2} \right)\,dt
\ \int_\Omega |f|^q \,dt \,.
\end{equation}
\end{corollary}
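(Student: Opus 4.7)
The plan is to reduce Corollary \ref{keycor} to Proposition \ref{key} by decomposing $\Omega$ into its connected components. Since $\Omega\subsetneq\R$ is open, it is a countable disjoint union of open intervals $I_k$, each of the form $(a,b)$, $(-\infty,b)$, or $(a,\infty)$. For $f\in C_0^\infty(\Omega)$, compactness of the support forces it to meet only finitely many $I_k$, and each restriction $f_k:=f|_{I_k}$ (extended by zero) lies in $C_0^\infty(I_k)$. The observation that links components to the global weight is
\begin{equation*}
\dist(t,\Omega^c)=\dist(t,I_k^c)\qquad\text{for every }t\in I_k\,:
\end{equation*}
``$\geq$'' holds because $\Omega^c\subset I_k^c$, while ``$\leq$'' holds because the (one or two) boundary points of $I_k$ lie in $\Omega^c$ by maximality of $I_k$, and these are the closest points of $I_k^c$ to any $t\in I_k$.

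Next I would promote Proposition \ref{key} to an arbitrary open interval. For a bounded component $I=(a,b)$, the affine map sending $(-1,1)$ onto $(a,b)$ carries $1-|t|$ to $\tfrac{2}{b-a}\dist(s,\{a,b\})$; a bookkeeping check shows that all Jacobian factors cancel and \eqref{eq:key} transfers verbatim, with $1-|s|$ replaced by $\dist(s,I^c)$ and the same constant $C_q$. For an unbounded component $I=(a,\infty)$ and $f\in C_0^\infty(I)$ supported in $[c,d]$, I would pick $M>2d-a$ so that $\min(s-a,M-s)=s-a=\dist(s,I^c)$ on $[c,d]$; regarding $f$ as an element of $C_0^\infty((a,M))$ and applying the bounded-interval version then gives \eqref{eq:key} on $(a,\infty)$, because all integrands vanish outside $[c,d]$. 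The case $I=(-\infty,b)$ is symmetric.

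Finally, given $t_0\in\Omega$ lying in some component $I_{k_0}$, the component version of \eqref{eq:key} yields
\begin{equation*}
|f(t_0)|^{q+2}\leq C_q\int_{I_{k_0}}\!\left(|f'|^2-\frac{|f|^2}{4\dist(t,\Omega^c)^2}\right)dt\cdot\int_{I_{k_0}}|f|^q\,dt.
\end{equation*}
The one-dimensional Hardy inequality \eqref{eq:hardy} (whose 1D weight $D_I$ reduces to $\dist(\cdot,I^c)$) guarantees that the Hardy integrand has non-negative integral over every component. Therefore summing in the other components only enlarges the first factor, and trivially enlarges the second, so both $I_{k_0}$-integrals are bounded above by the corresponding $\Omega$-integrals; taking the supremum over $t_0$ yields \eqref{eq:keycor}. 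I expect no real obstacle here: the scaling step is mere bookkeeping, the truncation for unbounded components is harmless because $f$ is compactly supported, and all of the conceptual content already sits in Proposition \ref{key}.
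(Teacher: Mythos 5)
Your proof is correct and follows the same route as the paper's very terse argument (translate/dilate to pass from $(-1,1)$ to an arbitrary bounded interval, truncate for unbounded components, then sum over the finitely many connected components that the compact support of $f$ meets). The only real content you add beyond the paper's one-line sketch is the explicit observation that summing the Hardy term over the other components can only increase the first factor because each per-component Hardy integral is non-negative --- which you can also deduce directly from Proposition \ref{key} applied to that component (the right side must be $\geq 0$ and the second factor is non-negative), without invoking \eqref{eq:hardy}.
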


\begin{proof}
 First, if $\Omega$ is an interval, then \eqref{eq:keycor} follows from \eqref{eq:key} by a translation and a dilation. Now the extension to arbitrary open sets (that is, countable unions of disjoint intervals) is straightforward.
\end{proof}

The following inequality will be needed to deal with the two dimensional case.

\begin{corollary}\label{keycor2}
Let $q\geq4$. Then, with the same constant $C_q$ as in \eqref{eq:key}, one has for every open set $\Omega\subsetneq\R$ and for every $f\in C_0^\infty(\Omega)$
\begin{equation*}
\sup_{t\in\Omega} |f(t)|^{q} \leq C_{q-2} \int_\Omega \left(|f'|^2 -\frac{|f|^2}{4\dist(t,\Omega^c)^2} \right)\,dt 
\left( \int_\Omega |f|^2 \,dt \right)^{\frac{2}{q-2}}
\left( \int_\Omega |f|^q \,dt \right)^{\frac{q-4}{q-2}}  \,.
\end{equation*}
\end{corollary}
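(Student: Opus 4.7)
The plan is to deduce Corollary \ref{keycor2} directly from Corollary \ref{keycor} by applying the latter at a shifted exponent and then estimating the resulting $L^{q-2}$ integral by Hölder interpolation between $L^2$ and $L^q$.

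First, since $q \geq 4$ we have $q-2 \geq 2$, so Corollary \ref{keycor} applies with exponent $q-2$ in place of $q$. This yields
\begin{equation*}
\sup_{t\in\Omega} |f(t)|^{q} \leq C_{q-2} \int_\Omega \left(|f'|^2 -\frac{|f|^2}{4\dist(t,\Omega^c)^2} \right)\,dt \int_\Omega |f|^{q-2}\,dt \,,
\end{equation*}
since $(q-2)+2 = q$ matches the exponent on the left. It remains only to bound the last factor by the desired combination of $L^2$ and $L^q$ norms.

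Second, I interpolate $L^{q-2}$ between $L^2$ and $L^q$. Writing $\tfrac1{q-2} = \tfrac{\theta}{2} + \tfrac{1-\theta}{q}$ and solving gives $\theta = \tfrac{4}{(q-2)^2}$, so that Hölder's inequality yields $\|f\|_{q-2} \leq \|f\|_2^{\theta}\|f\|_q^{1-\theta}$. Raising to the $(q-2)$-th power and simplifying the exponents — using $(q-2)\theta = \tfrac{4}{q-2}$ and $(q-2)(1-\theta) = \tfrac{q(q-4)}{q-2}$ — gives
\begin{equation*}
\int_\Omega |f|^{q-2}\,dt \leq \left( \int_\Omega |f|^2\,dt \right)^{\frac{2}{q-2}} \left( \int_\Omega |f|^q\,dt \right)^{\frac{q-4}{q-2}} \,.
\end{equation*}
Substituting this into the previous bound gives precisely the inequality stated in the corollary, with the same constant $C_{q-2}$.

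There is no real obstacle here; the only subtlety is the bookkeeping of the interpolation exponents, which needs to produce exactly $\tfrac{2}{q-2}$ and $\tfrac{q-4}{q-2}$. This is purely a Hölder computation, and the hypothesis $q\geq 4$ is used only to ensure both that Corollary \ref{keycor} applies at exponent $q-2$ and that the exponent $\tfrac{q-4}{q-2}$ on $\int |f|^q$ is nonnegative.
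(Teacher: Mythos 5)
Your proof is correct and follows exactly the paper's own argument: apply Corollary \ref{keycor} with exponent $q-2$ and then bound $\int_\Omega |f|^{q-2}\,dt$ by interpolating between the $L^2$ and $L^q$ norms via H\"older's inequality. The exponent bookkeeping is right and the role of the hypothesis $q\geq 4$ is correctly identified.
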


\begin{proof}
 We apply Corollary \ref{keycor} with $q$ replaced by $q-2$ and we estimate $\|f\|_{q-2}^{q-2}$ using H\"older's inequality.
\end{proof}


\subsection{The inequality in dimensions $N\geq 3$}

In order to pass from the one-di\-men\-sio\-nal inequality of Corollary \ref{keycor} to Theorem \ref{main} we use the well-known argument of Gagliardo and Nirenberg. We shall use the following notation for $x\in\R^N$ and $1\leq j\leq N$,
$$
\tilde x_j=(x_1,\ldots,x_{j-1},x_{j+1},\ldots,x_N)\in\R^{N-1} \,.
$$
Then one has

\begin{lemma}\label{gn}
 Let $N\geq 2$ and let $f_1,\ldots,f_N\in L^{N-1}(\R^N)$. Then the function $f(x):= f_1(\tilde x_1)\cdots f_N(\tilde x_N)$ belongs to $L^1(\R^N)$ and
$$
\|f\|_{L^1(\R^N)} \leq \prod_{j=1}^N \|f_j\|_{L^{N-1}(\R^N)} \,.
$$
\end{lemma}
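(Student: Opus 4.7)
The plan is to prove this Loomis--Whitney-type inequality by induction on $N$, using H\"older's inequality as the only real tool. The base case $N=2$ is trivial: then $L^{N-1}=L^1$ and $f(x_1,x_2)=f_1(x_2)f_2(x_1)$, so by Fubini $\|f\|_{L^1(\R^2)}=\|f_1\|_{L^1}\|f_2\|_{L^1}$.

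For the inductive step, assume the result in dimension $N-1$ and write
\[
\int_{\R^N} |f(x)|\,dx = \int_\R\!dx_N \int_{\R^{N-1}} |f_N(\tilde x_N)|\ \prod_{j=1}^{N-1} |f_j(\tilde x_j)|\,d\tilde x_N.
\]
I would then fix $x_N$ and peel off the factor $f_N$ (which depends only on $\tilde x_N$) via H\"older with exponents $N-1$ and $(N-1)/(N-2)$:
\[
\int_{\R^{N-1}} |f_N|\ \prod_{j<N}|f_j|\,d\tilde x_N
\leq \|f_N\|_{N-1}\left(\int_{\R^{N-1}}\prod_{j<N}|f_j(\tilde x_j)|^{\frac{N-1}{N-2}}\,d\tilde x_N\right)^{\!\frac{N-2}{N-1}}.
\]
For fixed $x_N$, each function $|f_j(\tilde x_j)|^{(N-1)/(N-2)}$ ($j<N$) depends on the $N-2$ variables in $\tilde x_N$ other than $x_j$, so I can apply the inductive hypothesis in dimension $N-1$ to obtain
\[
\int_{\R^{N-1}}\prod_{j<N}|f_j|^{\frac{N-1}{N-2}}\,d\tilde x_N
\leq \prod_{j<N} \|f_j(\cdot,x_N)\|_{L^{N-1}(\R^{N-2})}^{\frac{N-1}{N-2}},
\]
since the $L^{N-2}$-norm on $\R^{N-2}$ of $|f_j|^{(N-1)/(N-2)}$ equals the $(N-1)/(N-2)$-th power of the $L^{N-1}$-norm of $f_j(\cdot,x_N)$.

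It remains to integrate the resulting bound over $x_N\in\R$, which leaves the expression
\[
\|f_N\|_{N-1}\int_\R \prod_{j<N}\|f_j(\cdot,x_N)\|_{L^{N-1}(\R^{N-2})}\,dx_N.
\]
A final application of H\"older's inequality with $N-1$ equal exponents, combined with Fubini to re-express $\int_\R \|f_j(\cdot,x_N)\|_{L^{N-1}(\R^{N-2})}^{N-1}\,dx_N = \|f_j\|_{L^{N-1}(\R^{N-1})}^{N-1}$, collapses this into $\prod_{j=1}^{N}\|f_j\|_{N-1}$, completing the induction. The only thing I would watch carefully is the bookkeeping of which variables each $f_j$ depends on after fixing $x_N$, so that the induction hypothesis applies in the correct dimension; beyond that the proof is essentially a sequence of H\"older applications and poses no serious obstacle.
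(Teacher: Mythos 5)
Your induction is correct in every step: the base case $N=2$ is Fubini, the exponents $N-1$ and $(N-1)/(N-2)$ are conjugate, the functions $|f_j|^{(N-1)/(N-2)}$ ($j<N$) for fixed $x_N$ are exactly in the position required by the dimension-$(N-1)$ hypothesis, the norm-rescaling identity is right, and the final generalized H\"older with $N-1$ equal exponents together with Fubini closes the argument. The paper does not prove this lemma itself but cites Brezis's book for it, and the proof you give is precisely the standard H\"older-plus-induction argument found there, so you have reproduced the intended proof.
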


The easy proof, based on H\"older's inequality, can be found for instance in \cite{Br}.

With Lemma \ref{gn} at hand we now are ready to give the \emph{proof of Theorem \ref{main}}. Let $e_1,\ldots,e_N$ be the standard unit vectors in $\R^N$. For a given domain $\Omega\subsetneq\R^N$ we write $d_j$ instead of $d_{e_j}$, that is,
$$
d_j(x) = \inf\{ |t|:\ x+te_j \in\Omega^c \} \,.
$$
Now if $u\in C_0^\infty(\Omega)$, then Corollary \eqref{eq:keycor} yields
$$
|u(x)| \leq C_q \left( g_j(\tilde x_j) h_j(\tilde x_j) \right)^{\frac{N-2}{4(N-1)}}
$$
for any $1\leq j\leq N$, where
$$
g_j(\tilde x_j) := \int_\R \left( \left|\frac{\partial u}{\partial x_j} (x)\right|^2 - \frac{|u(x)|^2}{4 d_j(x)^2} \right)dx_j
\quad\text{and}\quad
h_j(\tilde x_j) := \int_\R |u(x)|^q \,dx_j
$$
with $q=\frac{2N}{N-2}$. Thus
$$
|u(x)|^N \leq C_q^N \prod_{j=1}^N \left( g_j(\tilde x_j) h_j(\tilde x_j) \right)^{\frac{N-2}{4(N-1)}} \,,
$$
or, what is the same,
$$
|u(x)|^q \leq C_q^q \prod_{j=1}^N \left( g_j(\tilde x_j) h_j(\tilde x_j) \right)^{\frac 1{2(N-1)}} \,.
$$
From Lemma \ref{gn} we infer that
$$
\int_{\R^N} |u(x)|^q \,dx \leq C_q^q \prod_{j=1}^N \left( \int_{\R^{N-1}} \sqrt{g_j(y) h_j(y)} \,dy \right)^{\frac 1{N-1}} \,.
$$
Now we use the fact that
$$
\|h_j\|_{L^1(\R^{N-1})} = \| u\|_{L^q(\R^N)}^q
\qquad\text{for every}\ j=1,\ldots, N\,,
$$
and derive from the Schwarz and the arithmetic-geometric mean inequality that
\begin{align*}
 \prod_{j=1}^N \int_{\R^{N-1}} \sqrt{g_j(y) h_j(y)} \,dy 
& \leq \prod_{j=1}^N \|g_j\|_1^{\frac12} \|h_j\|_1^{\frac 12}
= \| u\|_q^{\frac{Nq}2} \prod_{j=1}^N \|g_j\|_1^{\frac12} \\
& \leq \| u\|_q^{\frac{Nq}2} \left( N^{-1} \sum_{j=1}^N \|g_j\|_1 \right)^{\frac N2} .
\end{align*}
To summarize, we have shown that
$$
\int_{\R^N} |u(x)|^q \,dx \leq C_q^q \| u\|_q^{\frac{Nq}{2(N-1)}}
 \left( N^{-1} \sum_{j=1}^N \|g_j\|_1 \right)^{\frac N{2(N-1)}} \,,
$$
that is,
\begin{align*}
\left( \int_{\R^N} |u(x)|^q \,dx \right)^{\frac 2q}
& \leq C_q^{\frac{4(N-1)}{N-2}} N^{-1} \sum_{j=1}^N \|g_j\|_1 \\
& = C_q^{\frac{4(N-1)}{N-2}} N^{-1} \int_\Omega \left( |\nabla u|^2 - \frac14 \sum_{j=1}^N \frac{|u|^2}{d_j^2} \right)dx \,.
\end{align*}
Finally, as in \cite{Da}, we average over all choices of the coordinate system and obtain the inequality claimed in Theorem \ref{main}.
\qed


\subsection{The inequality in dimensions one and two}

Next, we prove Theorem \ref{main12}.

\emph{The case $N=1$.} We bound $\|f\|_q^q \leq \|f\|_\infty^{q-2} \|f\|_2^2$ and apply Corollary \ref{keycor} to obtain
$$
\int |f|^q \,dt \leq \|f\|_\infty^{q-2} \|f\|_2^2 \leq C_q^\frac{q-2}{q+2} \left( \int_\Omega \left(|f'|^2 -\frac{|f|^2}{4\dist(t,\Omega^c)^2} \right)\,dt \right)^\frac{q-2}{q+2} \|f\|_q^\frac{q(q-2)}{q+2} \|f\|_2^2 \,.
$$
This is the inequality claimed in Theorem \ref{main12}.

\emph{The case $N=2$.} Here we proceed similarly to the case $N\geq 3$. We first observe that by H\"older's inequality it suffices to prove the inequality only for large $q$, say $q\geq 4$. For such $q$ we can apply Corollary \ref{keycor2} and obtain
$$
|u(x)|^q \leq C_{q-2}^q \prod_{j=1}^2 \left( g_j(\tilde x_j)^\frac12 h_j(\tilde x_j)^\frac{q-4}{2(q-2)}  k_j(\tilde x_j)^\frac1{q-2} \right) \,,
$$
where $g_j$ and $h_j$ are defined as before and where
$$
k_j(\tilde x_j) := \int_\R |u(x)|^2 \,dx_j \,.
$$
We integrate this inequality over $\R^2$ (note that Lemma \ref{gn} is trivial for $N=2$) and bound using H\"older's inequality,
\begin{align*}
\int_{\R^2} |u(x)|^q \,dx 
& \leq C_{q-2}^q \prod_{j=1}^2 \int_{\R} g_j(y)^\frac12 h_j(y)^\frac{q-4}{2(q-2)}  k_j(y)^\frac1{q-2} \,dy \\
& \leq C_{q-2}^q \prod_{j=1}^2 \|g_j\|_1^\frac12 \|h_j\|_1^\frac{q-4}{2(q-2)} \|k_j\|_1^\frac1{q-2} \\
& = C_{q-2}^q \|u\|_q^\frac{q(q-4)}{q-2} \|u\|_2^\frac{4}{q-2} \prod_{j=1}^2 \|g_j\|_1^\frac12 \,.
\end{align*}
The claimed inequality now follows as before by the arithmetic-geometric mean inequality for $\prod_j \|g_j\|_1$ and by averaging over all coordinate systems.
\qed


\subsection{The case $p>2$}

The analogue of Proposition \ref{key} is

\begin{proposition}\label{keyp}
 Let $q\geq p\geq 2$. There is a constant $C_{p,q}$ such that for every $f\in C_0^\infty(-1,1)$ and for every $t\in [-1,1]$ one has
\begin{equation}
 \label{eq:keyp}
|f(t)|^{q(p-1)+p} \leq C_{p,q} \int_{-1}^1 \left(|f'|^p - \left(\frac{p-1}{p}\right)^p \frac{|f|^p}{(1-|s|)^p} \right)\,ds 
\ \left( \int_{-1}^1 |f|^q \,ds \right)^{p-1} \,.
\end{equation}
\end{proposition}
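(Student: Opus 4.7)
My plan is to follow the proof of Proposition~\ref{key} and substitute two $p=2$-specific steps with their natural analogues for $p\ge 2$. Write $f(s)=(1-|s|)^{(p-1)/p}g(s)$ and reduce (by the diamagnetic inequality and approximation) to the case $f,g\ge 0$; by symmetry it then suffices to bound $|f(t)|^{p\beta}$ for $t\in[0,1]$ by integrating over $[0,1]$, where $g$ is smooth and vanishes near $s=1$. Throughout I abbreviate $\beta:=1+\tfrac{q(p-1)}{p}$ (so $p\beta=q(p-1)+p$), $\gamma:=\tfrac{q(p-1)}{p}$, $\alpha:=\tfrac{(p-1)\beta}{p}$ (noting $p\alpha=(p-1)\beta$), $X:=\int_0^1(1-s)^{p-1}|g'|^p\,ds$, $Y:=\int_{-1}^1 g^q(1-|s|)^\gamma\,ds$ and $E:=g(0)^p+X$.

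\emph{Ground-state inequality.} With $u:=(1-s)^{(p-1)/p}g'$ and $v:=\tfrac{p-1}{p}(1-s)^{-1/p}g\ge 0$ one has $|f'|^p=|u-v|^p$ and $(\tfrac{p-1}{p})^p|f|^p/(1-s)^p=v^p$. For $p\ge 2$ the uniform convexity of $|\cdot|^p$ yields
\[
|u-v|^p\;\ge\;v^p-p\,v^{p-1}u+c_p|u|^p,\qquad v\ge 0,\ u\in\R,
\]
for some $c_p>0$: writing $u=\theta v$, this reduces to $|\theta-1|^p-1+p\theta\ge c_p|\theta|^p$, and the LHS is nonnegative (by convexity), vanishes quadratically at $\theta=0$, and grows like $|\theta|^p$ at $\pm\infty$, so its ratio to $|\theta|^p$ has a positive infimum when $p\ge 2$. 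Since $-p\,v^{p-1}u=-(\tfrac{p-1}{p})^{p-1}(g^p)'$ is an exact derivative, integrating over $[0,1]$ (using $g(1)=0$) gives
\[
\int_0^1\Big(|f'|^p-(\tfrac{p-1}{p})^p\tfrac{|f|^p}{(1-s)^p}\Big)\,ds\;\ge\;(\tfrac{p-1}{p})^{p-1}g(0)^p+c_p X\;\ge\;c'_p E.
\]

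\emph{Plateau estimate.} Next I show $g(0)^{p\beta}\le C_{p,q}\,E\,Y^{p-1}$, replacing the algebraic cancellation that made the proof of Proposition~\ref{key} work for $p=2$. Weighted H\"older gives $|g(0)-g(s)|\le\int_0^s|g'|\,dt\le X^{1/p}(-\ln(1-s))^{(p-1)/p}$, so $g(s)\ge g(0)/2$ on $[0,1-e^{-L}]$ where $L=(g(0)/(2X^{1/p}))^{p/(p-1)}$, whence
\[
Y\;\ge\;(g(0)/2)^q\int_0^{1-e^{-L}}(1-s)^\gamma\,ds\;=\;\beta^{-1}(g(0)/2)^q(1-e^{-\beta L}).
\]
Using $1-e^{-x}\gtrsim\min(x,1)$ and splitting into the cases $\beta L\ge 1$ vs.\ $\beta L\le 1$: in the first $Y\gtrsim g(0)^q$ combines with $E\ge g(0)^p$ to give $EY^{p-1}\gtrsim g(0)^{p\beta}$; in the second $Y\gtrsim g(0)^q L$ yields $Y^{p-1}\gtrsim g(0)^{p\beta}/X$, whence $EY^{p-1}\ge XY^{p-1}\gtrsim g(0)^{p\beta}$.

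\emph{Passage to $t\in(0,1]$.} From $g(t)^\beta-g(0)^\beta\le\beta\int_0^t g^{\beta-1}|g'|\,ds$, pulling out $(1-t)^{-\alpha}$ via $(1-s)^{-\alpha}\le(1-t)^{-\alpha}$ on $[0,t]$ and applying H\"older with exponents $p/(p-1),\,p$ (with the split $g^{\beta-1}(1-s)^{\alpha-(p-1)/p}\cdot|g'|(1-s)^{(p-1)/p}$, which is matched so that the two factors integrate to $Y^{(p-1)/p}$ and $X^{1/p}$) yields $g(t)^\beta\le g(0)^\beta+\beta(1-t)^{-\alpha}X^{1/p}Y^{(p-1)/p}$. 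Raising to the $p$-th power, multiplying by $(1-t)^{(p-1)\beta}$, and using $(p-1)\beta=p\alpha$ produces $|f(t)|^{p\beta}\le 2^{p-1}(g(0)^{p\beta}+\beta^p XY^{p-1})$; combined with the previous two steps this is \eqref{eq:keyp}. The principal obstacle is the Ground-state step: for $p>2$ the exact identity of the $p=2$ case is gone and one must verify the \emph{positivity} of the constant $c_p$ in the uniform-convexity lower bound — this is the place where the hypothesis $p\ge 2$ is genuinely used.
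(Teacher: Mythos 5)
Your proof is correct, and while the overall scheme (ground-state substitution $f=(1-|s|)^{(p-1)/p}g$, uniform-convexity lower bound, then a separate estimate at $g(0)$ and a weighted-H\"older passage from $g(0)$ to general $t$) parallels the paper's, you handle the central step — bounding $g(0)^{q(p-1)+p}$ by $E\,Y^{p-1}$ — by a genuinely different argument. The paper introduces a Lipschitz cutoff $\chi$ supported in $[-T,T]$, sets $g_0=\chi g$, and runs a case distinction on whether $|g(0)|^q$ is small or large relative to $A^{p/(p-1)}Y$; the constant ends up depending on $L=\big(\int|\chi'|^{pq/(q-p)}\big)^{pq/(q-p)}$ and on a parameter $A$ chosen to absorb the commutator term $\int|g|^p|\chi'|^p(1-|s|)^{p-1}$. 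Your ``plateau'' argument instead shows directly, via the modulus-of-continuity bound $|g(0)-g(s)|\le X^{1/p}(-\ln(1-s))^{(p-1)/p}$, that $g\ge g(0)/2$ on $[0,1-e^{-L}]$ with $L=(g(0)/2X^{1/p})^{p/(p-1)}$, yielding $Y\ge\beta^{-1}(g(0)/2)^q(1-e^{-\beta L})$, and then a two-case analysis on $\beta L\gtrless 1$ gives $EY^{p-1}\gtrsim g(0)^{p\beta}$. This is more self-contained (no cutoff function, no auxiliary parameter to tune) and, as a small bonus, avoids the exponent $pq/(q-p)$ that degenerates at $q=p$; the cost is that you must re-derive the convexity inequality $|u-v|^p\ge v^p-pv^{p-1}u+c_p|u|^p$ rather than cite it, but your sketch of that (ratio test at $\theta\to 0$ and $\theta\to\pm\infty$, with the $p\ge 2$ hypothesis entering precisely at $\theta\to 0$) is sound. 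The ground-state step and the passage to $t\in(0,1]$ are essentially the paper's.
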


Given this inequality, Theorem \ref{mainp} follows again by the Gagliardo-Nirenberg argument as in the previous subsections, but now with $q=Np/(N-p)$. We omit the details, we only point out that the constant in the definition of $D_{\Omega,p}$ appears through the evaluation of the integral
$$
|\Sph^{N-1}|^{-1}  \int_{\Sph^{N-1}} |a\cdot e|^p \,de
= \frac{\Gamma(\tfrac{p+1}2)\ \Gamma(\tfrac N2)}{\sqrt\pi\ \Gamma(\tfrac{N+p}2)} \ |a|^p 
$$
for $a\in\R^N$.

\begin{proof}
We shall use that $|a+b|^p \geq |a|^p+ p|a|^{p-2} \re\overline a b +c_p |b|^p$ for all $a,b\in\C$ and some explicit $c_p$, see \cite{FrSe}. (Here we use that $p\geq 2$.) Hence if we write $f(t)=(1-|t|)^{(p-1)/p} \ g(t)$, then
\begin{align*}
& \int_{-1}^1 \left(|f'|^p - \left(\tfrac{p-1}{p}\right)^p \tfrac{|f|^p}{(1-|s|)^p} \right)\,ds \\
& = \int_{-1}^1 \left( \left|(1-|s|)^{(p-1)/p}g' - \tfrac{p-1}p(\sgn s)(1-|s|)^{-1/p}g\right|^p - \left(\tfrac{p-1}{p}\right)^p \frac{|g|^p}{1-|s|} \right)\,ds \\
& \geq \int_{-1}^1 \left( -p \left(\tfrac{p-1}{p}\right)^{p-1} (\sgn s) |g|^{p-2} \re \overline g g' + c_p (1-|s|)^{p-1}|g'|^p \right)\,ds \\
& = 2 \left(\tfrac{p-1}{p}\right)^{p-1} |g(0)|^p + c_p \int_{-1}^1 (1-|s|)^{p-1}|g'|^p \,ds \,.
\end{align*}
Thus it is enough to show that
\begin{align*}
|g(t)|^{q(p-1)+p} \leq C (1-|t|)^{-\frac{(p-1)(p+q(p-1))}{p}} & \left( \int_{-1}^1 (1-|s|)^{p-1}|g'|^p \,ds + d |g(0)|^p \right) \\
& \times \left( \int_{-1}^1 |g|^q (1-|s|)^{\frac{q(p-1)}{p}} \,ds \right)^{p-1} \,.
\end{align*}
where  $d = 2c_p^{-1} (\frac{p-1}{p})^{p-1}$.
By symmetry it suffices to prove this for $t\in[0,1]$ only. Now for such $t$ we can use the fact that $(1-t)^{\frac{(p-1)(q(p-1)+p)}{p^2}}$ is decreasing and we find that
\begin{align*}
& |g(t)|^\frac{q(p-1)+p}{p} - |g(0)|^\frac{q(p-1)+p}{p} 
\leq \tfrac{q(p-1)+p}{p} \int_0^t |g|^\frac{q(p-1)}p |g'| \,ds \\ 
& \quad \leq \tfrac{q(p-1)+p}{p} (1-t)^{-\frac{(p-1)(p+q(p-1))}{p^2}} \int_0^1 |g|^\frac{q(p-1)}p |g'| (1-s)^{\frac{(p-1)(p+q(p-1))}{p^2}}\,ds \\
& \quad \leq \tfrac{q(p-1)+p}{p} (1-t)^{-\frac{(p-1)(p+q(p-1))}{p^2}}\!\! \left( \int_0^1 |g'|^p (1-s)^{p-1}ds \right)^{\!\!\frac1p}\! \left( \int_0^1 |g|^q (1-s)^{\frac{q(p-1)}p} ds \right)^{\!\!\frac{p-1}p} .
\end{align*}
Thus it remains to show that
\begin{equation}\label{eq:goalgeneral}
|g(0)|^{q(p-1)+p} \leq C \left( \int_{-1}^1 |g'|^p (1-|t|)^{p-1}\,dt + d|g(0)|^p \right) \left(\int_{-1}^1 |g|^q (1-|t|)^{\frac{q(p-1)}p} \,dt \right)^{p-1} \,.
\end{equation}
In order to prove this, we choose a free parameter $T\in (0,1)$ and a Lipschitz function $\chi$ with $0\leq \chi\leq 1$, $\chi(0)=1$ and $\chi(t)=0$ for $|t|\in[T,1]$. We put
$$
L := \left( \int_{-1}^1 |\chi'|^\frac{pq}{q-p} ds \right)^{\frac{pq}{q-p}} \,.
$$
Now we choose another parameter $A$ (which will be fixed later depending on $T$ and $L$) and distinguish two cases according to whether
\begin{equation}\label{eq:case1}
|g(0)|^q \leq A^\frac{p}{p-1} \int_{-1}^1 |g|^q (1-|s|)^\frac{q(p-1)}{p} \,ds
\end{equation}
or not. In the first case, we can trivially estimate
\begin{align*}
|g(0)|^\frac{q(p-1)+p}{p} & \leq A\ |g(0)|\ \left( \int_{-1}^1 |g|^q (1-|t|)^{\frac{q(p-1)}p}\,dt \right)^{\frac{p-1}p} \\
& \leq Ad^{-\frac1p} \left( \int_{-1}^1 |g'|^p (1-|t|)^{p-1}\,dt + d|g(0)|^p \right)^{\!\!\frac1p} \!\left( \int_{-1}^1 |g|^q (1-|t|)^{\frac{q(p-1)}p}\,dt \right)^{\!\!\!\frac{p-1}p}
\end{align*}
and we have arrived at our goal \eqref{eq:goalgeneral}. Now assume that the opposite inequality in \eqref{eq:case1} holds. We define $g_0:=\chi g$ and estimate this function similarly as above. Indeed, since $g_0(T)=g_0(-T)=0$,
\begin{align*}\label{eq:g1bound}
|g_0(0)|^\frac{q(p-1)+p}{p} & \leq \tfrac{q(p-1)+p}{2p} \int_{-T}^T |g_0|^{\frac{q(p-1)}p} |g_0'| \,ds \\
& \leq \tfrac{q(p-1)+p}{2p} (1-T)^{-\frac{(p-1)(p+q(p-1))}{p^2}} \int_{-1}^1 |g_0|^\frac{q(p-1)}p |g_0'| (1-|s|)^{\frac{(p-1)(p+q(p-1))}{p^2}}\,ds \\
& \leq \tfrac{q(p-1)+p}{2p} (1-T)^{-\frac{(p-1)(p+q(p-1))}{p^2}} \left( \int_{-1}^1 |g_0'|^p (1-|s|)^{p-1}\,ds \right)^{\frac1p}\\
& \qquad\qquad\qquad\qquad\qquad\qquad\qquad \times\left( \int_{-1}^1 |g_0|^{q} (1-|s|)^{\frac{q(p-1)}p}\,ds \right)^{\frac{p-1}p} \,.
\end{align*}
In order to again arrive at \eqref{eq:goalgeneral} we recall that $g_0(0)=g(0)$ and that one has
$$
\int_{-1}^1 |g_0|^{q} (1-|s|)^{\frac{q(p-1)}p}\,ds \leq \int_{-1}^1 |g|^{q} (1-|s|)^{\frac{q(p-1)}p}\,ds \,.
$$
Finally, we estimate the term involving $g_0'$ by means of the triangle inequality
\begin{align*}
\left( \int_{-1}^1 |g_0'|^p (1-|s|)^{p-1}\,ds \right)^\frac1p
& \leq \left( \int_{-1}^1 |g'|^p \chi^p (1-|s|)^{p-1}\,ds \right)^\frac1p\\
& \qquad\qquad + \left( \int_{-1}^1 |g|^p |\chi'|^p (1-|s|)^{p-1}\,ds \right)^\frac1p \\
& \leq \left( \int_{-1}^1 |g'|^p (1-|s|)^{p-1}\,ds \right)^\frac1p \\
& \qquad\qquad + L \left( \int_{-1}^1 |g|^q (1-|s|)^\frac{q(p-1)}p \,ds \right)^\frac 1q \\
& \leq \left( \int_{-1}^1 |g'|^p (1-|s|)^{p-1}\,ds \right)^\frac1p 
+ L A^{-\frac{p}{q(p-1)}} |g(0)| \\
& \leq 2^\frac{p-1}p \left( \int_{-1}^1 |g'|^p (1-|s|)^{p-1}\,ds + L^p A^{-\frac{p^2}{q(p-1)}} |g(0)|^p \right)^\frac1p \,,
\end{align*}
where in the next to last step we used the inequality opposite to \eqref{eq:case1}. Thus choosing $A$ large enough so that $L^p A^{-\frac{p^2}{q(p-1)}}\leq d$ we arrive again at \eqref{eq:goalgeneral}.
\end{proof}


\section{Proofs of Theorems \ref{hclr} and \ref{hlt12}}

\subsection{Equivalence of Sobolev and Lieb-Thirring inequalities}

We shall deduce Theorem \ref{hclr} from Theorem \ref{main} by applying the abstract approach developed in \cite{FrLiSe2}. Let us briefly summarize the main result of \cite{FrLiSe2}. Let $X$ be a sigma-finite measure space and let $t$ be a closed, non-negative quadratic form in $L^2(X)$ with domain $\dom t$. We assume the following

\begin{assumption}[Generalized Beurling-Deny conditions]\label{ass:bd}
$\phantom{x}$
\begin{enumerate}
 \item[(a)] 
if $u,v\in\mathrm{dom}\, t$ are real-valued, then $t[u+iv]=t[u]+t[v]$,
\item[(b)]
if $u\in\mathrm{dom}\, t$ is real-valued, then $|u|\in\mathrm{dom}\, t$ and $t[|u|]\leq t[u]$,
\item[(c)]
there is a measurable, a.e. positive function $\omega$ such that if $u\in\mathrm{dom}\, t$ is non-negative, then $\min(u,\omega)\in\mathrm{dom}\, t$ and $t[\min(u,\omega)]\leq t[u]$. Moreover, there is a form core $\mathcal Q$ of $t$ such that $\omega^{-1}\mathcal Q$ is dense in $L^2(X, \omega^{2\kappa/(\kappa-1)})$.
\end{enumerate}
\end{assumption}

The main result from \cite{FrLiSe2} concerns the equivalence of an estimate on the number $N(T-V)$ of negative eigenvalues of the operator $T+V$, taking multiplicities into account, and the validity of a Sobolev inequality.

\begin{theorem}\label{clrweighted}
 Under Assumption \ref{ass:bd} for some $\kappa>1$ the following are equivalent:
\begin{enumerate}
 \item[(i)]\label{it:sobolevweighted} $T$ satisfies a Sobolev inequality with exponent $q=2\kappa/(\kappa-1)$, that is, there is a constant $S>0$ such that for all $u\in\mathrm{dom}\, t$,
\begin{equation}\label{eq:sobolevweighted}
 t[u] \geq S \left( \int_X |u|^q \,dx \right)^{2/q} \,.
\end{equation}
\item[(ii)]\label{it:clrweighted} $T$ satisfies a CLR inequality with exponent $\kappa$, that is, there is a constant $L>0$ such that for all $0\geq V\in L^\kappa(X)$,
\begin{equation}\label{eq:clrweighted}
 N(T+V) \leq L \int_X V_-^{\kappa} \,dx \,.
\end{equation}
\end{enumerate}
The respective constants are bounded in terms of each other according to
\begin{equation}\label{eq:CLRconst}
S^{-\kappa} \leq L  \leq e^{\kappa-1} S^{-\kappa} \,.
\end{equation}
\end{theorem}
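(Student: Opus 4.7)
\emph{Direction (ii) $\Rightarrow$ (i).} For nonzero $u \in \dom t$ and $s > 0$, set $V_s := -s|u|^{q-2}$. Since the relation $q = 2\kappa/(\kappa-1)$ gives $(q-2)\kappa = q$, the potential $V_s$ lies in $L^\kappa(X)$ with $\int V_{s,-}^\kappa\,dx = s^\kappa \|u\|_q^q$. The function $u$ itself is a test function showing that $t[u] + \langle u, V_s u\rangle = t[u] - s\|u\|_q^q < 0$ whenever $s > t[u]/\|u\|_q^q$, so by the min--max principle $T + V_s$ has at least one negative eigenvalue in that range. Plugging this into the assumed CLR bound (ii) yields $1 \leq L\, s^\kappa \|u\|_q^q$; passing to the limit $s \downarrow t[u]/\|u\|_q^q$ and simplifying via $q(\kappa-1) = 2\kappa$ rearranges this to $t[u] \geq L^{-1/\kappa} \|u\|_q^2$, which is (i) with $S \geq L^{-1/\kappa}$, i.e.\ the lower half $L \geq S^{-\kappa}$ of the constant sandwich.

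\emph{Direction (i) $\Rightarrow$ (ii).} The plan is to proceed in two steps, both via the Birman--Schwinger correspondence. The first step upgrades the Sobolev inequality (\ref{eq:sobolevweighted}) to an ultracontractivity bound of the form $\|e^{-tT}\|_{L^1(X) \to L^\infty(X)} \leq c_\kappa\, S^{-\kappa} t^{-\kappa}$ for all $t > 0$ by the Varopoulos--Nash--Moser equivalence. Assumption \ref{ass:bd} is exactly what makes this abstract transfer go through: (a)--(b) ensure that $e^{-tT}$ is a symmetric submarkovian semigroup, while (c) with the weight $\omega$ furnishes the form core needed to extend $e^{-tT}$ from $L^2$ to $L^1$ and thus to interpret the claimed heat-kernel bound pointwise. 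The second step applies, for $0 \geq V \in L^\kappa(X)$, the Birman--Schwinger bound $N(T+V) \leq \#\{j : \lambda_j(K) \geq 1\}$ with $K := V_-^{1/2} T^{-1} V_-^{1/2}$, which is further controlled by $\Tr \Phi(K)$ for any $\Phi \geq \mathbf{1}_{[1,\infty)}$. Writing $T^{-1} = \int_0^\infty e^{-tT}\,dt$ and using the ultracontractivity estimate to control $\Tr V_-^{1/2} e^{-tT} V_-^{1/2}$ reduces $\Tr \Phi(K)$ to a numerical constant times $\int_X V_-^\kappa\,dx$; the factor $e^{\kappa-1}$ emerges from optimizing the choice of $\Phi$ together with extremizing an integrand of the form $t^\kappa e^{-t}$ at $t = \kappa$.

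\emph{Main obstacle.} The hard direction is (i) $\Rightarrow$ (ii). In this purely abstract framework there is no Gaussian kernel and no Feynman--Kac formula, so the passage from Sobolev to ultracontractivity must be driven entirely by the algebraic consequences of Assumption \ref{ass:bd}, and the auxiliary weight $\omega$ from condition (c) has to be tracked carefully so that it drops out of the final CLR constant. Pinning down the sharp exponential factor $e^{\kappa-1}$ is a further delicate point: it requires an optimal choice of $\Phi$ in the Birman--Schwinger trace bound together with a tight Golden--Thompson-type inequality relating $\Tr e^{-t(T+V)}$ to $\Tr V_-^\kappa e^{-tT}$. Carrying all of this through abstractly, rather than on $\R^N$ where explicit heat-kernel formulas are available, is the substantive content of the Frank--Lieb--Seiringer machinery \cite{FrLiSe2} that the authors invoke.
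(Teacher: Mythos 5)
The paper does not prove Theorem \ref{clrweighted}; it is stated as a summary of the main result of \cite{FrLiSe2}, with the remark that the argument originates in the Li-Yau proof of the CLR inequality. There is therefore no in-paper proof to compare your proposal against, and the comparison must be with the strategy of \cite{FrLiSe2} that the authors invoke.

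Your direction (ii)\,$\Rightarrow$\,(i) is the standard argument and gives the lower bound $L\geq S^{-\kappa}$ correctly, but as written it has a small circularity: to have $V_s=-s|u|^{q-2}\in L^\kappa$ you already need $u\in L^q$, which is part of what is being proved. The gap is harmless and can be closed by the dual formulation: for $W\geq0$ with $\|W\|_\kappa=1$ set $V=-\lambda W$; then CLR forces $N(T-\lambda W)=0$, i.e.\ $t[u]\geq\lambda\int W|u|^2\,dx$, for every $\lambda<L^{-1/\kappa}$, and taking the supremum over such $W$ (using $2\kappa/(\kappa-1)=q$) yields $t[u]\geq L^{-1/\kappa}\|u\|_q^2$ with no a priori assumption on $\|u\|_q$. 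Your direction (i)\,$\Rightarrow$\,(ii) is only a sketch, which you acknowledge; the outline (Sobolev\ $\Rightarrow$ ultracontractivity $\Rightarrow$ CLR via Birman--Schwinger and heat-kernel integration) is indeed the Li-Yau/Rozenblum mechanism underlying \cite{FrLiSe2}. One attribution in the sketch is off: conditions (a)--(b) of Assumption \ref{ass:bd} are only the first Beurling--Deny criterion (positivity preservation); the submarkovian/$L^\infty$-contraction property that powers the Nash iteration comes from the weighted second criterion in condition (c), which is precisely why the weight $\omega$ enters and has to be tracked --- it does not merely supply a form core. The claimed source of the factor $e^{\kappa-1}$ (optimization of $t^\kappa e^{-t}$) is also a heuristic rather than a derivation; the sharp constant comes out of the specific convexity/rescaling steps in \cite{FrLiSe2} and is not obtained by a one-line extremization.
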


This theorem has is origins in the Li-Yau proof of the CLR inequality \cite{LiYa} and we refer to \cite{FrLiSe2} for further references.

We now show how to apply this theorem in order to deduce a weak form of Theorem~\ref{hclr} for \emph{convex} domains $\Omega$, namely,
\begin{equation}
 \label{eq:hclrconv}
N(-\Delta - (2\dist(x,\Omega^c))^{-2} + V) \leq L_N \int_\Omega V_-^{\frac N2} \,dx \,.
\end{equation}
The general case is, unfortunately, more complicated and will be dealt with in the following subsection. Obviously, the (closure of the) quadratic form
$$
t[u] := \int_\Omega \left( |\nabla u|^2 - \frac{|u|^2}{4\dist(x,\Omega^c)^2} \right) dx \,, \qquad u\in C_0^\infty(\Omega)\,,
$$
in the Hilbert space $L^2(\Omega)$ satisfies conditions (a) and (b) above. Moreover, from the identity
\begin{equation}
 \label{eq:gsr}
t[v\omega] = \int_\Omega \left( |\nabla v|^2 - \frac{\Delta \dist(x,\Omega^c)}{2\ \dist(x,\Omega^c)} |v|^2 \right) \dist(x,\Omega^c)\,dx
\end{equation}
with $\omega:=\sqrt{\dist(x,\Omega^c)}$ and from the fact that $\Delta \dist(x,\Omega^c)\leq 0$ as a distribution, we easily deduce that (c) is satisfied as well. Our Hardy-Sobolev-Maz'ya inequality \eqref{eq:main} and \eqref{eq:distconv} show that (i) in Theorem \ref{clrweighted} is valid, and therefore lead to \eqref{eq:hclrconv}.


\subsection{Proof of Theorem \ref{hclr}. The general case.}

The problem with the more general inequality involving the function $D_\Omega$ is that we do not know how to verify Assumption (c). In particular, we are not aware of a useful analogue of \eqref{eq:gsr}. We can use, however, the following remark (see the end of Section 4.1 in \cite{FrLiSe2}):

\emph{Theorem \ref{clrweighted} remains valid if \emph{(c)} is replaced by the following condition.}

\begin{enumerate}
 \item[(d)]
 For every a.e. positive function $W\in L^1(X)\cap L^\infty(X)$ consider the self-adjoint, non-negative operator $\Upsilon$ in $L^2(X,Wdx)$ associated to the quadratic form $t[u]$. Then $\exp(-\beta \Upsilon)$ is an integral operator in $L^2(X,Wdx)$ for every $\beta>0$.
\end{enumerate}

We are going to prove Theorem \ref{hclr} using (d) instead of (c). For technical reasons we have to work with regularizations defined by
\begin{equation}
 \label{eq:teps}
t_\epsilon[u] := \int_\Omega \left( |\nabla u|^2 - (1-\epsilon) \frac{|u|^2}{4D_\Omega^2} \right) dx \,,
\qquad u\in H^1_0(\Omega)\,,
\end{equation}
with $\epsilon\in(0,1]$. As before, $t_\epsilon$ satisfies (a), (b) and (i) with a constant which can be chosen independently of $\epsilon$. (Namely, $S=K_N$ from Theorem \ref{main}.) Hence if we can verify (d) for any $\epsilon\in(0,1]$, Theorem \ref{clrweighted} yields the inequality $N(T_\epsilon+V) \leq L \int_\Omega V_-^{N/2} \,dx$ for the operators $T_\epsilon$ associated to $t_\epsilon$. Here $L$ is a constant independent of $\epsilon$. Similarly as in \cite{FrLiSe1} one can show that $T_\epsilon+V\searrow T_0+V$ in strong resolvent sense. Therefore, if $P_\epsilon$ and $P_0$ are the spectral projectors of $T_\epsilon+V$ and $T_0+V$ corresponding to $(-\infty,0)$, then $P_\epsilon\to P_0$ strongly, and by Fatou's lemma for traces $N(T_0+V)=\tr P_0 \leq \liminf_{\epsilon\to 0} \tr P_\epsilon = \liminf_{\epsilon\to 0} N(T_\epsilon+V) \leq L \int_\Omega V_-^{N/2} \,dx$, as claimed.

Thus, to complete the proof of Theorem \ref{hclr} we need to verify that $\exp(-\beta \Upsilon_\epsilon)$ is an integral operator in $L^2(\Omega,Wdx)$. Here $W$ is a given, a.e. positive function in $L^1(\Omega)\cap L^\infty(\Omega)$, $\beta>0$ is a constant and $\Upsilon_\epsilon$ is the self-adjoint, non-negative operator in $L^2(\Omega,Wdx)$ associated with the quadratic form $t_\epsilon$ from \eqref{eq:teps}. We note that $\Upsilon_\epsilon u = W^{-1} (-\Delta-(1-\epsilon)(2D_\Omega)^{-2})u$ for $u\in C_0^\infty(\Omega)$. Since the coefficients of this operator are not smooth, the existence of an integral kernel is not completely standard and we include a short proof.

We claim that $\exp(-\beta \Upsilon_\epsilon)$ is, in fact, a Hilbert-Schmidt operator in the space $L^2(\Omega,Wdx)$. Via the unitary mapping $L^2(\Omega,Wdx)\ni u\mapsto \sqrt W u\in L^2(\Omega,dx)$, this is equivalent to saying that the operator $\exp(-\beta H_\epsilon)$ in the space $L^2(\Omega,dx)$ is a Hilbert-Schmidt operator, where $H_\epsilon:=W^{-\frac12} (-\Delta-(1-\epsilon)(2D_\Omega)^{-2}) W^{-\frac12}$. This, in turn, will follow if we can prove that the eigenvalues $e_j$ of the operator $H_\epsilon$ satisfy a bound of the form $e_j \geq C j^{2/N}$, where the constant $C$ may depend on $W$ and $\epsilon$, but is independent of $j$. In Lemma \ref{clr} below we show that a bound of this form is true for the operator $W^{-\frac12} (-\Delta) W^{-\frac12}$, where $-\Delta$ is the Dirichlet Laplacian on $\Omega$. Now Davies' Hardy inequality \eqref{eq:hardy} implies that
$$
t_\epsilon[u] \geq \epsilon \int_\Omega |\nabla u|^2\,dx \,,
\qquad u\in H^1_0(\Omega)\,,
$$
and therefore $H_\epsilon\geq \epsilon W^{-\frac12} (-\Delta) W^{-\frac12}$ in the sense of quadratic forms. The inequality for $e_j$ now follows from the variational principle. This completes the proof of Theorem~\ref{hclr}.
\qed

\medskip

In the previous proof we used a lower bound on the $j$-th eigenvalue of the operator $W^{-\frac12} (-\Delta) W^{-\frac12}$ in $L^2(\Omega,dx)$, where $W$ is an a.e. positive function in $L^1(\Omega)\cap L^\infty(\Omega)$. For later purposes we state a similar bound also in dimensions one and two.

\begin{lemma}\label{clr}
 Let $\Omega\subset\R^N$ and let $\tau>0$ if $N=1,2$ and $\tau=0$ if $N\geq 3$. Let $(\mu_j)$ be the increasing sequence of eigenvalues (counting multiplicities) of the operator $W^{-\frac12} (-\Delta+\tau) W^{-\frac12}$ in $L^2(\Omega)$. Then
$$
\mu_j \geq C_N \ \|W\|_{\frac N2}^{-1} \ j^{\frac 2N}
\qquad\text{if}\ N\geq 3\,,
$$
and
$$
\mu_j \geq C_{N,p}\ \tau^{1-\frac{N}{2p}}\ \|W\|_{p}^{-1} \ j^{\frac 1p}
\qquad\text{if}\ N=1,2\,,
$$
where $p\geq 1$ if $N=1$ and $p>1$ if $N=2$.
\end{lemma}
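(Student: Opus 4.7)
My approach is to reduce the eigenvalue estimate to the classical CLR and Lieb--Thirring bounds on $\R^N$ via a Birman--Schwinger / min-max argument. The eigenvalues $\mu_j$ of $W^{-1/2}(-\Delta+\tau)W^{-1/2}$ on $L^2(\Omega,dx)$ coincide with the generalized eigenvalues of $(-\Delta+\tau)v=\mu W v$ on the form domain $H^1_0(\Omega)$, so by the min-max principle
$$
\#\{k:\mu_k<\mu\} = N_-\!\left(-\Delta+\tau-\mu W\right)
$$
(Dirichlet realization on $\Omega$). Since elements of $H^1_0(\Omega)$ extended by zero lie in $H^1(\R^N)$, the right-hand side is bounded by the number of negative eigenvalues of the operator $-\Delta+\tau-\mu\tilde W$ on $L^2(\R^N)$, where $\tilde W:=W\chi_\Omega$. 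Taking $\mu\downarrow\mu_j$ gives $j\le N_-(-\Delta+\tau-\mu_j\tilde W)$ on $\R^N$.

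For $N\ge 3$ (so $\tau=0$), the Cwikel--Lieb--Rozenblum inequality yields
$$
N_-(-\Delta-\mu_j W) \le L_N^{CLR}\,\mu_j^{N/2}\,\|W\|_{N/2}^{N/2},
$$
and rearranging $j\le L_N^{CLR}\mu_j^{N/2}\|W\|_{N/2}^{N/2}$ gives the first bound with $C_N=(L_N^{CLR})^{-2/N}$.

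For $N=1,2$ with $\tau>0$, use that the negative eigenvalues of $-\Delta+\tau-\mu W$ are in one-to-one correspondence, via a shift by $\tau$, with the eigenvalues of $-\Delta-\mu W$ lying below $-\tau$; each such eigenvalue contributes at least $\tau^\gamma$ to the Lieb--Thirring $\gamma$-moment. Thus Chebyshev combined with the Lieb--Thirring inequality on $\R^N$ gives
$$
\tau^\gamma\,N_-(-\Delta+\tau-\mu_j W)
\;\le\;\sum_k |E_k(-\Delta-\mu_j W)|^\gamma
\;\le\;L_{\gamma,N}^{LT}\,\mu_j^{\gamma+N/2}\,\|W\|_{\gamma+N/2}^{\gamma+N/2}.
$$
Setting $p:=\gamma+N/2$ and solving for $\mu_j$ yields $\mu_j\ge C\,\tau^{1-N/(2p)}\|W\|_p^{-1}j^{1/p}$. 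The admissible range of $\gamma$ in the Lieb--Thirring inequality ($\gamma\ge 1/2$ for $N=1$, $\gamma>0$ for $N=2$) translates into $p\ge 1$ for $N=1$ and $p>1$ for $N=2$, matching the statement.

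There is no real obstacle once the correct classical input is identified: the min-max/extension-by-zero step is routine, and the only minor subtlety is the shift-and-Chebyshev trick needed in low dimensions, where genuine CLR fails and one must consume the spectral gap $\tau$ to convert a Lieb--Thirring moment bound into an eigenvalue-counting bound.
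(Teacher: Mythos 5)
Your proof is correct, and for $N\geq 3$ it follows exactly the paper's route: the Birman--Schwinger reduction to counting negative eigenvalues of $-\Delta+\tau-\mu W$, followed directly by CLR. For $N=1,2$, however, you take a genuinely different path. The paper stays at the level of the Birman--Schwinger operator itself: it applies the Lieb--Thirring \emph{trace} inequality $\Tr (AB^2A)^p\leq \Tr A^{2p}B^{2p}$ with $A=W^{1/2}$, $B=(-\Delta+\tau)^{-1/2}$, then computes $\Tr W^p(-\Delta+\tau)^{-p}$ explicitly by dominating the Dirichlet resolvent kernel by the free one and evaluating $(2\pi)^{-N}\int_{\R^N}(\xi^2+\tau)^{-p}\,d\xi$. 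You instead invoke the Lieb--Thirring \emph{eigenvalue moment} inequality on $\R^N$ together with a Chebyshev/shift argument: each eigenvalue of $-\Delta-\mu W$ below $-\tau$ contributes at least $\tau^\gamma$ to the $\gamma$-moment, so $\tau^\gamma N_-(-\Delta+\tau-\mu W)\leq L_{\gamma,N}\,\mu^{\gamma+N/2}\|W\|_{\gamma+N/2}^{\gamma+N/2}$, and setting $p=\gamma+N/2$ reproduces the claimed $\tau^{1-N/(2p)}$ scaling and the ranges $p\geq 1$ ($N=1$) and $p>1$ ($N=2$). Both arguments are valid and give the same conclusion; the paper's is more elementary in that it relies only on a basic operator trace inequality plus a direct kernel computation, whereas yours imports the full Lieb--Thirring eigenvalue bound (including the endpoint $\gamma=1/2$ for $N=1$) as a black box, which is arguably cleaner but rests on a deeper classical input. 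One tiny point worth stating explicitly: the passage from ``$j\leq N_-(-\Delta+\tau-\mu W)$ for all $\mu>\mu_j$'' to ``$j\leq C\mu_j^p\|W\|_p^p$'' should be done by taking $\mu\downarrow\mu_j$ in the \emph{right-hand side} of the CLR/LT estimate (which is continuous in $\mu$), not in $N_-$ itself, since $N_-(-\Delta+\tau-\mu_jW)$ may equal $j-1$ when $0$ is an eigenvalue there.
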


These bounds are not new. In the following proof we shall make use of the observation that $W^{-\frac12} (-\Delta) W^{-\frac12}$ is the inverse of the Birman-Schwinger operator. This allows us to derive Lemma \ref{clr} from classical inequalities about negative eigenvalues of Schr\"odinger operators.

\begin{proof}
 We denote by $N(\mu,W^{-1/2} (-\Delta+\tau) W^{-1/2})$ the number of eigenvalues (counting multiplicities) less than $\mu$ of the operator $W^{-1/2} (-\Delta+\tau) W^{-1/2}$. We shall show that
\begin{equation}
 \label{eq:clr}
N(\mu,W^{-1/2} (-\Delta) W^{-1/2}) \leq C_N'\ \mu^{N/2} \int_\Omega W^{N/2} \,dx
\end{equation}
for $N\geq 3$ and that
\begin{equation}
 \label{eq:clr12}
N(\mu,W^{-1/2} (-\Delta+\tau) W^{-1/2}) \leq C_{N,p}'\ \mu^{p}\ \tau^{-p+\frac N2} \int_\Omega W^p \,dx
\end{equation}
for $N=1,2$ and $p$ as stated in the lemma. Obviously, these bounds are equivalent to those stated in the lemma.

To prove \eqref{eq:clr} for $N\geq 3$ we note that $N(\mu,W^{-\frac12} (-\Delta) W^{-\frac12})$ is equal to the number of eigenvalues greater than $1/\mu$ of the operator $W^{\frac12} (-\Delta)^{-1} W^{\frac12}$. By the Birman-Schwinger principle this number is equal to the number of negative eigenvalues of the Schr\"odinger operator $-\Delta - \mu W$. Hence \eqref{eq:clr} is just a restatement of the Cwikel-Lieb-Rozenblum inequality \cite{LaWe,Hu}.

In order to prove \eqref{eq:clr12} for $N=1,2$, we use an inequality of Lieb and Thirring \cite{LiTh}, which states that for any non-negative operators $A$ and $B$ and for any $p\geq 1$, one has $\tr(AB^2A)^p \leq \tr A^{2p}B^{2p}$. For us, this implies that
$$
N(\mu,W^{-\frac12} (-\Delta+\tau) W^{-\frac12}) 
\leq \mu^{p} \tr \left( W^{\frac12} (-\Delta+\tau)^{-1} W^{\frac12} \right)^p
\leq \mu^{p} \tr W^p (-\Delta+\tau)^{-p}
$$
for $p\geq 1$. Now we use the fact that the integral kernel of $(-\Delta+\tau)^{-p}$, where $-\Delta$ is the Dirichlet Laplacian, is pointwise bounded by the same integral kernel, but now with $-\Delta$ being the Laplacian on $\R^N$. (This is true for the integral kernel of the semi-group $\exp(\beta\Delta)$ by the maximum principle, and follows for $(-\Delta+\tau)^{-p}$ by integration against $e^{-\beta\tau}\beta^{p-1}d\beta$.) Hence, we can bound
$$
\tr W^p (-\Delta+\tau)^{-p} 
\leq \int_\Omega W^p \,dx \ \frac{1}{(2\pi)^N}\int_{\R^N} \frac{d\xi}{(\xi^2+\tau)^p}
= C_{N,p} \ \tau^{-p+\frac N2} \int_\Omega W^p \,dx \,.
$$
Here the constant $C_{N,p}$ is finite for any $p\geq 1$ if $N=1$ and for any $p>1$ if $N=2$. This proves \eqref{eq:clr12}.
\end{proof}


\subsection{Proof of Theorem \ref{hlt12}}

In \cite{FrLiSe2} it was shown that Theorem \ref{clrweighted} has the following consequence.

\begin{corollary}\label{ltweighted}
 Assume that
\begin{enumerate}
 \item[(i')]\label{it:gnweighted} $T$ satisfies a Sobolev interpolation inequality with $2<q<\infty$ and $0<\theta<1$, that is, there is a constant $S>0$ such that for all $u\in\mathrm{dom}\, t$,
\begin{equation}\label{eq:gnweighted}
t[u]^{\theta} \|u\|^{2(1-\theta)} \geq S \left( \int_X |u|^q \,dx \right)^{2/q}  \,.
\end{equation}
\end{enumerate}
Moreover, suppose that Assumption \ref{ass:bd} holds with $\kappa$ replaced by $q/(q-2)$. Define $0<\kappa<\infty$ and $0<\gamma<\infty$ by
\begin{equation}\label{eq:gammakappa}
 \gamma= \frac{q(1-\theta)}{q-2}\,, \qquad \kappa=\frac{q \theta}{q-2} \,,
\end{equation}
Then for all $\tilde\gamma>\gamma$ and for all $V\in L^{\tilde\gamma+\kappa}(X)$ the negative eigenvalues $E_j$ of $T+V$ satisfy 
\begin{equation}\label{eq:ltweighted}
\sum_j |E_j|^{\tilde\gamma} \leq L_{\tilde\gamma} \int_X V_-^{\tilde\gamma+\kappa} \,dx
\end{equation}
with
$$
L_{\tilde\gamma} \leq 
\frac{\tilde\gamma^{\tilde\gamma+1}}{\gamma^\gamma (\tilde\gamma-\gamma)^{\tilde\gamma-\gamma}}
\ \frac{\Gamma(\gamma+\kappa+1) \Gamma(\tilde\gamma-\gamma)}{\Gamma(\tilde\gamma+\kappa+1)} 
\ e^{\gamma+\kappa-1} (\theta^{-\theta} (1-\theta)^{-1+\theta} S)^{-\gamma-\kappa} \,.
$$
\end{corollary}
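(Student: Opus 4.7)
The plan is to deduce the corollary from Theorem~\ref{clrweighted} applied to a family of shifted operators $T+\tau$, combined with an Aizenman--Lieb layer-cake argument that carries a free splitting parameter optimized at the end.

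First I would convert the interpolation inequality (i') into a genuine Sobolev inequality for each $T+\tau$, $\tau>0$. The weighted AM--GM inequality $\theta A+(1-\theta)B\geq A^\theta B^{1-\theta}$, applied with $A=t[u]/\theta$ and $B=\tau\|u\|^2/(1-\theta)$ and combined with (i'), yields
\[ t[u]+\tau\|u\|^2 \geq S\,\theta^{-\theta}(1-\theta)^{-(1-\theta)}\,\tau^{1-\theta}\left(\int_X|u|^q\,dx\right)^{2/q}. \]
Assumption~\ref{ass:bd} transfers at once from $t$ to $t_\tau:=t+\tau\|\cdot\|^2$, so Theorem~\ref{clrweighted} applied with the exponent $q/(q-2)=\gamma+\kappa$ yields the shifted CLR inequality
\[ N\bigl((T+\tau)+W\bigr)\leq L(\tau)\int_X W_-^{\gamma+\kappa}\,dx,\qquad L(\tau)=e^{\gamma+\kappa-1}\bigl(\theta^{-\theta}(1-\theta)^{-(1-\theta)}\,S\bigr)^{-(\gamma+\kappa)}\tau^{-\gamma}, \]
where the power $\tau^{-\gamma}$ comes from the identity $(1-\theta)(\gamma+\kappa)=\gamma$ recorded in \eqref{eq:gammakappa}.

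Next I would use the layer-cake representation $\sum_j|E_j|^{\tilde\gamma}=\tilde\gamma\int_0^\infty t^{\tilde\gamma-1}N((T+t)+V)\,dt$ and introduce a splitting parameter $\lambda\in(0,1)$ via the trivial identity $(T+t)+V=(T+\lambda t)+(V+(1-\lambda)t)$. Applying the shifted CLR of the previous paragraph with $\tau=\lambda t$ and $W=V+(1-\lambda)t$, and using $(V+(1-\lambda)t)_-=(V_--(1-\lambda)t)_+$, one obtains
\[ N\bigl((T+t)+V\bigr)\leq L(\lambda t)\int_X \bigl(V_-(x)-(1-\lambda)t\bigr)_+^{\gamma+\kappa}\,dx. \]
Substituting this into the layer cake, swapping the order of integration, and changing variables $s=(1-\lambda)t$ decouples the $t$-integral, which collapses to a Beta function via
\[ \int_0^{V_-(x)} s^{\tilde\gamma-\gamma-1}\bigl(V_-(x)-s\bigr)^{\gamma+\kappa}\,ds=V_-(x)^{\tilde\gamma+\kappa}\,B(\tilde\gamma-\gamma,\gamma+\kappa+1), \]
while the $\lambda$-dependence collects into a single overall factor $\lambda^{-\gamma}(1-\lambda)^{-(\tilde\gamma-\gamma)}$.

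The final step is to minimize $\lambda^{-\gamma}(1-\lambda)^{-(\tilde\gamma-\gamma)}$ over $\lambda\in(0,1)$; the optimum is $\lambda=\gamma/\tilde\gamma$ with minimum value $\tilde\gamma^{\tilde\gamma}/\bigl(\gamma^\gamma(\tilde\gamma-\gamma)^{\tilde\gamma-\gamma}\bigr)$. Combining this with the identity $\tilde\gamma\,B(\tilde\gamma-\gamma,\gamma+\kappa+1)=\tilde\gamma\,\Gamma(\tilde\gamma-\gamma)\Gamma(\gamma+\kappa+1)/\Gamma(\tilde\gamma+\kappa+1)$ and the explicit $L(\lambda t)$ reproduces the claimed bound on $L_{\tilde\gamma}$ exactly. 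The only nontrivial point is the role of the splitting: the naive choice $\lambda=1$ (applying the shifted CLR directly at scale $\tau=t$) produces the divergent integral $\int_0^\infty t^{\tilde\gamma-\gamma-1}\,dt$, so allowing $\lambda<1$ is precisely what pushes the extra $(1-\lambda)t$ shift into the potential and turns the outer integral into a convergent Beta integral; its optimization yields the geometric factor $\tilde\gamma^{\tilde\gamma+1}/\bigl(\gamma^\gamma(\tilde\gamma-\gamma)^{\tilde\gamma-\gamma}\bigr)$ in the final constant.
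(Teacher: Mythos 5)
The paper itself does not reprove this corollary; it quotes it from~\cite{FrLiSe2} and, in the subsection on Theorem~\ref{hlt12}, only remarks that it ``follows from Theorem \ref{clrweighted} applied to the operator $T+\tau$.'' Your argument carries out precisely that program: convert (i') into a family of Sobolev inequalities for $T+\tau$ via weighted AM--GM, apply the CLR bound of Theorem~\ref{clrweighted} at each level $\tau$, run the Aizenman--Lieb layer-cake identity with the split $(T+t)+V=(T+\lambda t)+(V+(1-\lambda)t)$, and optimize over $\lambda\in(0,1)$. I checked the bookkeeping: $(1-\theta)(\gamma+\kappa)=\gamma$ gives the $\tau^{-\gamma}$ law for $L(\tau)$, the $t$-integral reduces to $B(\tilde\gamma-\gamma,\gamma+\kappa+1)$ after the substitution $s=(1-\lambda)t$, and the minimizer $\lambda=\gamma/\tilde\gamma$ yields $\tilde\gamma^{\tilde\gamma}/\bigl(\gamma^\gamma(\tilde\gamma-\gamma)^{\tilde\gamma-\gamma}\bigr)$, which together with the outer factor $\tilde\gamma$ and the Beta/Gamma identity reproduces the stated bound on $L_{\tilde\gamma}$ exactly. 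Your proof is correct and is essentially the argument the paper defers to~\cite{FrLiSe2}.
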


Corollary \ref{ltweighted} implies Theorem \ref{hlt12} in the same way in which Theorem \ref{clrweighted} implies Theorem \ref{hclr}. Assumptions (a) and (b) are clearly satisfied for the quadratic form \eqref{eq:teps}, and Theorem \ref{main12} gives (i') with a constant independent of $\epsilon$. Moreover, since Corollary \ref{ltweighted} follows from Theorem \ref{clrweighted} applied to the operator $T+\tau$ (where $\tau>0$ is an arbitrary parameter), Assumption (c) can be replaced by the analogue of (d) where, however, $\Upsilon$ has to be replace by the operator $\Upsilon^{(\tau)}$ corresponding to the quadratic form $t[u]+\tau \|u\|^2$.

Similarly as in the previous subsection, we verify this condition by showing that the operator $\exp(-\beta H_\epsilon^{(\tau)})$
in the space $L^2(\Omega,dx)$ is Hilbert-Schmidt with $H_\epsilon^{(\tau)}:=W^{-\frac12} (-\Delta-(1-\epsilon)(2D_\Omega)^{-2}+\tau) W^{-\frac12}$. The latter condition is derived as before from the lower bound on the eigenvalues of the operator $W^{-\frac12} (-\epsilon\Delta+\tau) W^{-\frac12}$ stated in Lemma \ref{clr}. This concludes the proof of Theorem \ref{hlt12}.
\qed


\bibliographystyle{amsalpha}

\end{document}